\numberwithin{equation}{section}
\numberwithin{figure}{section}
 \theoremstyle{plain}
 \newtheorem{thm}{Theorem}
 \theoremstyle{plain}
 \newtheorem*{thm*}{Theorem}
  \theoremstyle{plain}
  \numberwithin{thm}{section}
  \theoremstyle{plain}
 \newtheorem*{cor*}{Corollary}
  \theoremstyle{plain}
  \newtheorem{lem}[thm]{Lemma}
  \theoremstyle{remark}
  \newtheorem{rem}[thm]{Remark}
    \theoremstyle{remark}
    \theoremstyle{remark}
   \theoremstyle{plain}
  \newtheorem{prop}[thm]{Proposition}
     \theoremstyle{plain}
  \newtheorem{definition}[thm]{Definition}
   \theoremstyle{plain}
   \theoremstyle{plain}
   \theoremstyle{plain}
  \def\Ddots{\mathinner{\mkern1mu\raise\p@
\vbox{\kern7\p@\hbox{.}}\mkern2mu
\raise4\p@\hbox{.}\mkern2mu\raise7\p@\hbox{.}\mkern1mu}}
\newcommand{\norm}[1]{\left\| #1 \right\|}
\newcommand{\eklm}[1]{\left\langle #1 \right\rangle}
\newcommand{\N}{{\mathbb N}}
\newcommand{\C}{{\mathbb C}}
\newcommand{\R}{{\mathbb R}}
\newcommand{\B}{{\mathcal B}}
\newcommand{\D}{{\mathcal D}}
\newcommand{\M}{{\mathcal M}}
\newcommand{\X}{{\mathfrak X}}
\newcommand{\Xbf}{{\mathbf X}}
\newcommand{\V}{{\mathcal V}}
\renewcommand{\epsilon}{\varepsilon}
\renewcommand{\rho}{\varrho}
\newcommand{\bdm}{\begin{displaymath}}
\newcommand{\edm}{\end{displaymath}}
\newcommand{\bq}{\begin{equation}}
\newcommand{\eq}{\end{equation}}
\newcommand{\bqn}{\begin{equation*}}
\newcommand{\eqn}{\end{equation*}}
\newcommand{\Cinft}{{\rm C^{\infty}}}
\renewcommand{\L}{{\rm L}}
\newcommand{\SO}{\mathrm{SO}}
\newcommand{\g}{{\bf \mathfrak g}}
\newcommand{\aL}{{\bf \mathfrak a}}
\newcommand{\nL}{{\bf \mathfrak n}}
\renewcommand{\k}{{\bf \mathfrak k}}
\newcommand{\m}{{\bf \mathfrak m}}
\newcommand{\p}{{\bf \mathfrak p}}
\newcommand{\so}{{\bf \mathfrak so}}
\newcommand{\eps}{\varepsilon}
\newcommand{\Ad}{\mathrm{Ad}}
\newcommand{\ad}{\mathrm{ad}}
\renewcommand{\Im}{\mathrm{Im}\,}
\renewcommand{\Re}{\mathrm{Re}\,}
\newcommand{\pr}{\mathrm{pr}}
\newcommand{\gam}{\Gamma\backslash}
\DeclareMathOperator{\Res}{Res}
\newcommand{\hooklongrightarrow}{\lhook\joinrel\longrightarrow}
\definecolor{darkblue}{rgb}{0,0,0.4}
\newcommand{\tu}[1]{\textup{#1}}
\begin{document}
\title{Pollicott-Ruelle resonant states and betti numbers}
\author{Benjamin K\"uster, Tobias Weich}
\email{benjamin.kuster@math.u-psud.fr, weich@math.uni-paderborn.de}
\maketitle
\begin{abstract}
Given a closed orientable  hyperbolic manifold of dimension $\neq 3$ we prove that the multiplicity of the Pollicott-Ruelle resonance of the geodesic flow on perpendicular one-forms at zero agrees with the first Betti number of the manifold. Additionally,  we prove that this equality is stable under small perturbations of the Riemannian metric and simultaneous small perturbations of the geodesic vector field within the class of contact vector fields. For more general perturbations we get bounds on the multiplicity of the resonance zero on all one-forms in terms of the first and zeroth Betti numbers.  Furthermore, we identify for hyperbolic manifolds further resonance spaces whose multiplicities are given by higher Betti numbers. 
\end{abstract}

\section*{Introduction}
Pollicott-Ruelle resonances have been introduced in the 1980's in order to study mixing properties of hyperbolic flows and can nowadays be understood as a discrete spectrum of the generating vector field (see Section~\ref{subsec:rp_res} for a definition and references). Very recently it has been discovered that in certain cases some particular Pollicott-Ruelle resonances have a topological meaning. Let us recall these results:

In \cite{dyatlov-zworski17} Dyatlov and Zworski prove that on a closed orientable surface $\mathcal M$ of negative curvature the Ruelle zeta function at zero vanishes to the order $|\chi(\mathcal M)|$, where $\chi(\mathcal M)$ is  the Euler characteristic of $\M$, generalizing a result of Fried in constant curvature \cite{fried86}\footnote{The methods of Fried to study the Ruelle zeta function generalize to locally symmetric spaces culminating in the recent work of Shen \cite{She18}.}. Dyatlov and Zworski prove their result as follows: 
By previous results on the meromorphic continuation of the Ruelle zeta function (see  \cite{DZ16,GLP13}) the order of vanishing of the Ruelle zeta function at zero can be  expressed as the alternating sum $\sum_{k=0}^2 (-1)^ {k+1} m_{\mathcal L_X, \Lambda^k X^\perp}(0)$, where $m_{\mathcal L_X, \Lambda^k X^\perp }(0)$ is the  multiplicity of the resonance zero of the Lie derivative $\mathcal L_X$ along the geodesic vector field $X\in \Gamma^\infty(T(S^\ast \M))$ acting on perpendicular $k$-forms. The latter are those $k$-forms on the unit co-sphere bundle $S^\ast \M$ that vanish upon contraction with $X$ (for the precise definition of the multiplicities, see Sections~\ref{sec:flows} and \ref{subsec:rp_res}).  
For closed orientable surfaces it is rather easy to see that 
$m_{\mathcal L_X, \Lambda^0 X^\perp}(0) = m_{\mathcal L_X, \Lambda^2 X^\perp}(0)= b_0(\mathcal M)=b_2(\mathcal M)$, thus the central task is to prove that $m_{\mathcal L_X, X^\perp}(0)= b_1(\mathcal M)$. 
Dyatlov and Zworski achieve this by combining  microlocal analysis with Hodge theory \cite[Proposition 3.1(2)]{dyatlov-zworski17}. 
This is a remarkable result  also apart from its implications on zeta function questions   because it identifies a resonance whose multiplicity has a precise topological meaning. 

Let us mention a second result establishing a connection between Pollicott-Ruelle resonances and topology: 
Dang and Rivi\`ere \cite{DR17b} examine a general Anosov flow $\varphi_t= e^{Yt}$ on a closed orientable   manifold. The Lie derivative $\mathcal L_Y$ has a discrete spectrum (the Pollicott-Ruelle spectrum) on certain spaces of anisotropic $p$-currents and it is shown that the exterior derivative acting on generalized eigenspaces of the eigenvalue zero forms a complex which is quasi-isomorphic to the de Rham complex.\footnote{We would like to point out that an analogous  statement also holds for Morse-Smale flows \cite{DR16, DR17, DR17a, DR17b} and in these  cases the spectral complex defined by the Pollicott-Ruelle resonances is actually isomorphic to the Morse complex. Consequently, the spectral complex of Dang and Rivi\`ere  can be considered as a generalization of the Morse complex to Anosov flows.}
While this result gives no precise information about the multiplicities of the resonances, it gives lower bounds for them and it holds in very great generality. 

As a third result we would like to mention \cite{GHW18} where the relation between Pollicott-Ruelle and quantum resonances is studied for compact and convex co-compact hyperbolic surfaces. 
For this correspondence the resonances at negative integers turn out to be exceptional points and it is shown that their multiplicities can be expressed by the Euler characteristic of the hyperbolic surface. 
The proof uses a Poisson transform to establish a bijection between the resonant states and holomorphic sections of certain line bundles, and the formula for the multiplicities follows from a Riemann-Roch theorem. 

In the present article we broaden the picture regarding  the topological properties of Pollicott-Ruelle resonant states. 
To this end, we combine some of the above approaches:
In a first step we use a quantum-classical correspondence to find new examples of resonances with topological multiplicities. 
In particular, we prove
\begin{prop}\label{prop:intro_perp}
For any closed orientable hyperbolic manifold $\mathcal M$ of dimension $n+1$ with $n\neq 2$, one has 
 \[
 m_{\mathcal L_X, X^\perp}(0)= b_1(\mathcal M).
 \]
 Furthermore, the resonance zero has no Jordan block and if $n\geq 3$, then zero is the unique leading resonance  and there is a spectral gap.\footnote{See the paragraph below \eqref{eq:WF_characterization} for the definition of ``having no Jordan block'' and the footnote in Prop.\ \ref{prop:constant_mult_perp} for the other terms used here.}
\end{prop}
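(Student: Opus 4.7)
My plan is to exploit a quantum--classical correspondence for hyperbolic manifolds which realizes Pollicott--Ruelle resonant states at zero, via a Poisson-type transform, as harmonic sections on $\mathcal M$. Writing $\mathcal M = \Gamma \backslash \mathbb H^{n+1}$ and identifying $S^*\mathcal M$ with $\Gamma \backslash G/M$ for $G = \mathrm{SO}(n+1,1)^\circ$ and $M = \mathrm{SO}(n)$, the geodesic vector field $X$ becomes right translation by the standard Cartan element $H_0$, the stable/unstable horocycle flows arise from the nilpotent root spaces $\mathfrak n^\pm \subset \mathfrak g$, and the perpendicular bundle $X^\perp$ splits $M$-equivariantly as $\mathfrak n^+ \oplus \mathfrak n^-$. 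Under this identification, a resonant state in $X^\perp$ at $0$ becomes a $\Gamma$-invariant $(\mathfrak n^+ \oplus \mathfrak n^-)$-valued distribution on $G/M$ that is annihilated by $\mathcal L_X$ and has the wavefront structure characteristic of first-band resonances.

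The core of the argument would be an automatic vanishing of the horocycle operators $U_\pm$ associated to $\mathfrak n^\pm$ on such a state $u = u^+ + u^-$. Using the commutation relations $[X,U_\pm] = \pm U_\pm$ together with the wavefront condition, I would show $U_\pm u = 0$, so that $u$ descends to a section on $\mathcal M$. A vector-valued Poisson transform then produces a one-form $\omega \in \Omega^1(\mathcal M)$, and the equation $\mathcal L_X u = 0$ translates into $\omega$ being harmonic. By Hodge theory this yields $\dim \ker \mathcal L_X|_{X^\perp} \leq b_1(\mathcal M)$; the reverse inequality comes from lifting an arbitrary harmonic one-form back to $X^\perp$ via the dual Poisson kernel and verifying that the result is a genuine Pollicott--Ruelle resonant state. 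The hypothesis $n \neq 2$ enters here because in dimension three the $M = \mathrm{SO}(2)$ action on $X^\perp$ is reducible in an exceptional way (splitting into complex line bundles), so that the Poisson transform picks up additional contributions and the clean equality $m_{\mathcal L_X, X^\perp}(0) = b_1(\mathcal M)$ can fail.

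For the absence of a Jordan block I would argue that a hypothetical generalized eigenstate $u$ with $\mathcal L_X u = v \neq 0$ in $\ker \mathcal L_X|_{X^\perp}$ would, by the same representation-theoretic mechanism, force an inconsistency between the diagonal action of $X$ on the first band and the eigenvalue equation; semisimplicity is then read off from the explicit spectral structure. The spectral gap for $n \geq 3$ follows from the band decomposition of resonances on hyperbolic manifolds: resonances outside the first band are confined to a half-plane $\{\mathrm{Re}\,\lambda \leq -\delta_n\}$ with $\delta_n > 0$, while the first-band resonances on $X^\perp$ are governed by the eigenvalues of the Hodge Laplacian on coclosed one-forms, all of which are nonnegative, so $\mathrm{Re}\,\lambda = 0$ enforces $\lambda = 0$. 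The main obstacle I anticipate is establishing the horocycle vanishing $U_\pm u = 0$ rigorously in the distributional setting, since this demands reconciling the boundary-value theory of principal series representations with the microlocal definition of resonant states.
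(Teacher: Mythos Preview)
Your overall architecture---realize $S^*\mathcal M$ as $\Gamma\backslash G/M$, reduce to first-band resonant states, identify those with $\Gamma$-invariant boundary distributions, and apply a Poisson transform to harmonic one-forms---matches the paper's. However, the step you flag as ``the main obstacle'' is precisely where your outline has a genuine gap, and it is the heart of the argument.

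You write that the commutation relation $[X,U_\pm]=\pm U_\pm$ together with the wavefront condition will give $U_\pm u=0$. This is not so. The commutation relation only tells you that $\mathcal U_-$ shifts the resonance parameter by $+1$: if $u\in \mathrm{Res}_{\nabla_X,E_+^*}(-1)$ then $\mathcal U_- u\in \mathrm{Res}_{\nabla_X,E_+^*\otimes E_-^*}(0)$. Nothing forces this to vanish a priori. The paper devotes substantial work to proving $\mathcal U_- u=0$: after identifying $E_+^*\otimes E_-^*\cong E_-^*\otimes E_-^*$ via the Cartan involution, it splits the target bundle as $S_0^2(E_-^*)\oplus \Lambda^2 E_-^*\oplus \C\,\tilde{\mathscr g}|_{E_-\times E_-}$ and treats each piece separately. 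The symmetric trace-free part vanishes by a Weitzenb\"ock lower bound on the Bochner Laplacian from \cite{dfg}; the antisymmetric part vanishes (for $n\neq 2$) by a second application of Gaillard's Poisson transform, now on two-forms, which would produce a co-closed two-form with negative Hodge-Laplace eigenvalue; the remaining trace part is handled by Liverani's contact-Anosov spectral gap, leaving only constant multiples of the metric (and for $n=2$ the orientation form), which are then shown not to lie in the image of $\mathcal U_-$ by a pairing argument using $\mathcal U_-^*=-\mathcal T\circ\mathcal U_-$. None of this is visible from the commutation relation alone.

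Two smaller corrections. First, once $\mathcal U_- u=0$ the state $u$ does not ``descend to a section on $\mathcal M$''; it corresponds to a $\Gamma$-invariant distributional section of a boundary bundle over $K/M\cong S^n$, and only after applying Gaillard's Poisson transform does one obtain a form on $\mathcal M$. Second, the exclusion $n\neq 2$ is not primarily due to reducibility of the $\mathrm{SO}(2)$-action on $X^\perp$; the paper's reason is that Gaillard's Poisson transform on $p$-forms fails to be bijective exactly when $p=n/2$, which for $p=1$ means $n=2$ (and indeed in that case $m_{\mathcal L_X,X^\perp}(0)=2b_1(\mathcal M)$). Finally, the $E_-^*$-component $u^-$ is disposed of at the outset, not by a horocycle argument, but by the observation that $\nabla_X$ is antisymmetric on $\mathrm L^2(E_-^*)$ and hence has no resonances with positive real part; since $\mathcal L_X=\nabla_X+1$ on $E_-^*$, there is no contribution at $\lambda=0$.
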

We prove these statements using the general framework of vector-valued quantum-classical correspondence developed by the authors \cite{kuester-weich18} as well as a Poisson transform of Gaillard \cite{gaillard1}.\footnote{It has been noted in \cite[Remark 5]{DGRS18} (without detailing the proof) that the statement of Proposition~\ref{prop:intro_perp} can alternatively be obtained by a zeta factorization argument similar to \cite[Proposition 7.7]{DGRS18} based on the work of Bunke and Olbrich \cite{BO95}.} Without any further effort these ingredients provide additional examples of resonance multiplicities related to not only the first but to all Betti numbers, see Proposition~\ref{prop:p_forms_Betti}. More precisely, the latter result shows that the $p$-th Betti number of a closed orientable  hyperbolic manifold can be recovered as the dimension of the space of some particular resonant $p$-forms in the kernel of a so-called  \emph{horocycle operator} (see Section \ref{sec:horocycle}).  
For $n=1$ the first statement in Proposition~\ref{prop:intro_perp} is the special case of \cite[Proposition 3.1(2)]{dyatlov-zworski17} restricted to hyperbolic surfaces. Interestingly $n=2$ is an exceptional case and the multiplicity is given by $m_{\mathcal L_X, X^\perp}(0)= 2b_1(\mathcal M)$ (see Remark~\ref{rem:exceptional}). For $n> 2$ the statement can be considered as a generalization of the Dyatlov-Zworski result to higher dimensions at the cost of restricting to manifolds of constant negative curvature. 

In a second step we can partially overcome this restriction and prove
\begin{prop}\label{prop:main2}
 Let $(\mathcal M,\mathscr g_0)$ be a closed orientable  hyperbolic manifold of dimension $n+1$ with $n\neq 2$ and let $\Gamma^{\infty}(\mathrm{S}^2(T^*\mathcal M))$ be the space of smooth symmetric two-tensors endowed with its Fr\'echet topology and $\mathscr R_{\mathcal M,<0}$ the open subset of Riemannian metrics of negative sectional curvature. Then there is an open neighborhood $U\subset\mathscr R_{\mathcal M,<0}$ of $\mathscr g_0$ such that for all Riemannian metrics $\mathscr g\in U$ one has
 \begin{equation}\label{eq:betti_perp}
 m_{\mathcal L_{X_\mathscr g}, X_{\mathscr g}^\perp}(0)= b_1(\mathcal M).
 \end{equation}
 Here $X_{\mathscr g}\in \Gamma^\infty(T(S_{\mathscr g}^\ast \M))$ is the geodesic vector field on the unit co-sphere bundle $S_{\mathscr g}^\ast \M$ with respect to $\mathscr g$. 
\end{prop}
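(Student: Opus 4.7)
The plan is a perturbation argument built on Proposition~\ref{prop:intro_perp}, which at $\mathscr g = \mathscr g_0$ supplies $m_{\mathcal{L}_{X_{\mathscr g_0}}, X_{\mathscr g_0}^\perp}(0) = b_1(\M)$, the absence of a Jordan block, and a spectral gap (for $n\geq 3$; for $n=1$, a gap on perpendicular one-forms is available from the spectral theory of contact Anosov three-flows, e.g.\ Faure--Tsujii). The idea is to combine an upper bound coming from perturbation theory of Pollicott--Ruelle resonances with a topological lower bound that persists under small perturbations of the metric.

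I would first transfer everything to a fixed manifold. A smooth family of diffeomorphisms $\Psi_{\mathscr g} : S^\ast_{\mathscr g_0}\M \to S^\ast_{\mathscr g}\M$ (e.g.\ radial rescaling of cotangent fibres with respect to $\mathscr g_0$) produces a smoothly $\mathscr g$-dependent Anosov vector field $\tilde X_{\mathscr g} := \Psi_{\mathscr g}^* X_{\mathscr g}$ on $S^\ast_{\mathscr g_0}\M$, coinciding with $X_{\mathscr g_0}$ at $\mathscr g = \mathscr g_0$, together with a continuously varying family of perpendicular sub-bundles $\tilde X_{\mathscr g}^\perp \subset T^*(S^\ast_{\mathscr g_0}\M)$. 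Resonance multiplicities are invariant under this identification. Structural stability of Anosov flows ensures that the hyperbolic stable/unstable splittings of $\tilde X_{\mathscr g}$ vary continuously in $\mathscr g$, so a single escape function in the Faure--Sj\"ostrand/Dyatlov--Zworski framework suffices: one obtains an anisotropic Sobolev space $\mathcal H$ on which every $\mathcal L_{\tilde X_{\mathscr g}}|_{\tilde X_{\mathscr g}^\perp}$ is Fredholm of index zero on a fixed complex neighbourhood of $0$, with resolvent depending continuously on $\mathscr g$ in operator norm. Combined with the spectral gap at $\mathscr g_0$ and a Riesz projector argument, this yields a disk $D(0,r)$ and an open neighbourhood $U \subset \mathscr R_{\M,<0}$ of $\mathscr g_0$ such that
\begin{equation*}
 \sum_{\lambda \in D(0,r)} m_{\mathcal L_{X_{\mathscr g}}, X_{\mathscr g}^\perp}(\lambda) = b_1(\M) \qquad \text{for all } \mathscr g \in U,
\end{equation*}
whence the upper bound $m_{\mathcal L_{X_{\mathscr g}}, X_{\mathscr g}^\perp}(0) \leq b_1(\M)$ follows.

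For the matching lower bound I would construct, for any contact Anosov metric $\mathscr g$ close to $\mathscr g_0$, an injection $H^1(\M; \R) \hookrightarrow \ker\bigl(\mathcal L_{X_{\mathscr g}}|_{X_{\mathscr g}^\perp}\bigr)$, following the surface strategy of Dyatlov--Zworski. Given a closed one-form $\alpha \in \Omega^1(\M)$, the perpendicular one-form
\begin{equation*}
 u_\alpha := \pi^* \alpha - (\iota_{X_{\mathscr g}} \pi^* \alpha)\, \alpha_{X_{\mathscr g}},
\end{equation*}
where $\alpha_{X_{\mathscr g}}$ denotes the contact one-form dual to $X_{\mathscr g}$, lies in $X_{\mathscr g}^\perp$ and, after correction by a perpendicular exact form obtained from solving a cohomological equation in the anisotropic space, represents a genuine resonant state at zero. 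Injectivity of the induced map modulo exact forms reduces to injectivity of the pullback $\pi^* : H^1(\M) \to H^1(S^\ast_{\mathscr g}\M)$, which by the Gysin sequence of the $n$-sphere bundle holds precisely when $n \neq 2$; this is the source of the dimensional restriction. Combined with the upper bound, \eqref{eq:betti_perp} follows.

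The main technical obstacle will be the uniformity of the anisotropic-Sobolev framework and the continuous dependence of the spectral data on $\mathscr g$: one needs a single escape function adapted to the family $\{\tilde X_{\mathscr g}\}$ that produces Fredholmness with a common spectral cut-off around $0$, and operator-norm continuity of the resolvents. A secondary delicate point is that the lower-bound construction must be carried out purely microlocally, as Gaillard's Poisson transform used to prove Proposition~\ref{prop:intro_perp} is unavailable outside the hyperbolic setting; for $n = 1$ one recovers the Dyatlov--Zworski surface argument directly.
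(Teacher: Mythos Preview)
Your overall strategy---upper bound via perturbation theory around $\mathscr g_0$, lower bound via topology---matches the paper's, but the execution differs in several respects, and one claim is incorrect.

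\textbf{How the paper proceeds.} The paper never works directly on the varying sub-bundle $X_{\mathscr g}^\perp$. It passes to the full bundle $T^\ast(S^\ast_{\mathscr g_0}\M)$ via the fibre-rescaling diffeomorphism, applies Bonthonneau's upper semicontinuity result \cite{Bon18} at the single point $\lambda=0$ (Lemma~\ref{lem:perturbation}), and obtains the lower bound from the Dang--Rivi\`ere spectral complex \cite{DR17b}: since that complex computes the de Rham cohomology of $S^\ast_{\mathscr g}\M$, one gets $m_{\mathcal L_{X_{\mathscr g}},T^\ast(S^\ast_{\mathscr g}\M)}(0)\geq b_1(S^\ast_{\mathscr g}\M)$, and adding the non-closed invariant contact form $\alpha_{\mathscr g}$ raises this by $b_0$ (Lemma~\ref{lem:lower_bound}). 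Only at the end does an elementary splitting lemma (Lemma~\ref{lem:perpall}) peel off $b_0(\M)$ to descend from $T^\ast(S^\ast_{\mathscr g}\M)$ to $X_{\mathscr g}^\perp$. This sidesteps both the varying-bundle technicalities and any explicit construction of perpendicular resonant states.

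\textbf{The spectral-gap invocation is unnecessary and unclear.} Proposition~\ref{prop:intro_perp} asserts a gap only for $n\geq 3$; you appeal to Faure--Tsujii for $n=1$, but their results concern the scalar transfer operator, not $\mathcal L_X$ on $X^\perp$. Fortunately no gap is needed: pointwise upper semicontinuity of $m(0)$ under $\mathrm C^1$-perturbation already yields $m_{\mathscr g}(0)\leq m_{\mathscr g_0}(0)=b_1(\M)$, which is exactly what the paper extracts from \cite{Bon18}.

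\textbf{The Gysin argument is wrong.} You claim that $\pi^\ast:H^1(\M)\to H^1(S^\ast_{\mathscr g}\M)$ is injective ``precisely when $n\neq 2$'' and that this is the source of the dimensional restriction. But the Gysin sequence of the oriented $S^n$-bundle reads
\[
0=H^{-n}(\M)\longrightarrow H^1(\M)\xrightarrow{\ \pi^\ast\ } H^1(S^\ast_{\mathscr g}\M)\longrightarrow H^{1-n}(\M),
\]
so $\pi^\ast$ is injective for \emph{every} $n\geq 1$ (and an isomorphism for $n\geq 2$; cf.\ \cite{CS50}, which the paper cites). Your lower-bound construction, once made precise, therefore works for all $n$---including $n=2$, where indeed $m_{\mathcal L_X,X^\perp}(0)=2b_1(\M)\geq b_1(\M)$ (Remark~\ref{rem:exceptional}). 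The restriction $n\neq 2$ enters only through Proposition~\ref{prop:intro_perp}, i.e.\ through the \emph{upper} bound at $\mathscr g_0$: Gaillard's Poisson transform fails to be bijective when $p=n/2$.
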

Note that also in dimension $n+1=2$ we obtain the equality \eqref{eq:betti_perp} only in a neighborhood of $\mathscr g_0$, whereas Dyatlov and Zworski prove the equality in this dimension for all $\mathscr g\in\mathscr R_{\M,<0}$. It seems thus reasonable to conjecture that the equality holds in all dimensions $n+1\neq 3$ for all $\mathscr g\in \mathscr R_{\M,<0}$, or at least for all $\mathscr g$ in those connected components of $\mathscr R_{\mathcal M,<0}$ that contain a metric of constant negative curvature. 

We obtain Proposition \ref{prop:main2} as a special case of a more general result on simultaneous perturbations of the Riemannian metric and the geodesic vector field. To state this result, consider in the situation of Proposition \ref{prop:main2} some Riemannian metric $\mathscr g\in\mathscr R_{\mathcal M,<0}$ and an arbitrary Anosov vector field $Y_{\mathscr g}$ on $S_{\mathscr g}^\ast \M$. Denoting by $Y_{\mathscr g}^\perp\subset T^\ast(S_{\mathscr g}^\ast \M)$ the ``perpendicular'' subbundle formed by all co-vectors annihilating $Y_{\mathscr g}$ fiber-wise, the multiplicities of the resonance zero of the Lie derivative $\mathcal L_{Y_{\mathscr g}}$ acting on sections of $Y_{\mathscr g}^\perp$ and $T^\ast(S_{\mathscr g}^\ast \M)$, respectively, are easy to relate under relatively mild assumptions: by Lemma \ref{lem:perpall} it suffices to assume that there is a one-form $\alpha_{\mathscr g}$ on $S_{\mathscr g}^*\mathcal M$ with $\iota_{Y_{\mathscr g}}\alpha_{\mathscr g}=1$, $\iota_{Y_{\mathscr g}}d\alpha_{\mathscr g}=0$, and $T^\ast(S^\ast_{\mathscr g}\M)=\R\alpha_{\mathscr g} \oplus Y_{\mathscr g}^\perp$ to have the relation
\bq
m_{\mathcal L_{Y_{\mathscr g}},T^*(S_{\mathscr g}^*\mathcal M)}(0)=m_{\mathcal L_{Y_{\mathscr g}}, Y_{\mathscr g}^\perp}(0)+ b_0(\mathcal M).\label{eq:decom3592306}
\eq
This is fulfilled, for example, if $\alpha_{\mathscr g}$ is  a contact form and $Y_{\mathscr g}$ is a contact Anosov vector field with respect to $\alpha_{\mathscr g}$. In particular, if $Y_{\mathscr g}=X_{\mathscr g}$ is the geodesic vector field, one can take $\alpha_{\mathscr g}$ to be the canonical contact form given by the restriction of the Liouville one-form to $S^\ast_{\mathscr g}\M$. So  \eqref{eq:betti_perp} is in fact equivalent to the equation
\begin{equation}\label{eq:betti_nonperp}
 m_{\mathcal L_{X_\mathscr g}, T^*(S_{\mathscr g}^*\mathcal M)}(0)= b_0(\mathcal M)+b_1(\mathcal M).
 \end{equation}
In Section \ref{sec:nonconst} we study the stability of the equation \eqref{eq:betti_nonperp} upon simultaneous perturbations of the Riemannian metric and the geodesic vector field. We obtain the following main result:
\begin{thm}\label{thm:main2}
If $\dim \M \neq 3$ and $\mathscr g_0\in \mathscr R_{\mathcal M,<0}$ is a metric of constant negative curvature,  then there exists an open set $U\subset \mathscr R_{\mathcal M,<0}$ containing $\mathscr g_0$ and a constant $\delta>0$ such that for all Riemannian metrics $\mathscr g\in U$ and all vector fields $Y_{\mathscr g}\in \Gamma^\infty(T(S_{\mathscr g}^*\mathcal M))$ with\footnote{See \eqref{eq:C1norm} for the definition of the $\mathrm{C}^1$-norm used here. For small $\Vert Y_{\mathscr g}-X_{\mathscr g}\Vert_{\mathrm{C}^1}$ the vector field $Y_{\mathscr g}$ is Anosov by the structural stability of the Anosov property \cite{kato73}, so that the Pollicott-Ruelle resonances of $\mathcal L_{Y_{\mathscr g}}$ are well-defined.}  $\Vert Y_{\mathscr g}-X_{\mathscr g}\Vert_{\mathrm{C}^1}<\delta$ one has the bounds
\bqn
b_1(\mathcal M) \;\leq \;m_{\mathcal L_{Y_{\mathscr g}},T^*(S_{\mathscr g}^*\mathcal M)}(0)\;\leq\; b_0(\mathcal M) + b_1(\mathcal M),
\eqn
and if there is a one-form $\alpha_{\mathscr g}$ on $S_{\mathscr g}^*\mathcal M$ with  $\mathcal L_{Y_{\mathscr g}}\alpha_{\mathscr g}=0$ and $d\alpha_{\mathscr g}\neq 0$, then the bounds improve to the equality
\bqn
 m_{\mathcal L_{Y_{\mathscr g}},T^*(S_{\mathscr g}^*\mathcal M)}(0)=b_0(\mathcal M) + b_1(\mathcal M).
\eqn
\end{thm}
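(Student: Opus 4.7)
The plan is to sandwich $m_{\mathcal L_{Y_{\mathscr g}},T^*(S_{\mathscr g}^*\M)}(0)$ between an upper bound coming from upper semicontinuity of Pollicott--Ruelle resonance multiplicities and a lower bound coming from the Dang--Rivi\`ere quasi-isomorphism. After fixing a smooth family of diffeomorphisms identifying $S^*_{\mathscr g}\M$ with $S^*_{\mathscr g_0}\M$ (e.g.\ by radial projection with respect to $\mathscr g_0$), structural stability \cite{kato73} lets us regard $\mathcal L_{Y_{\mathscr g}}$ as a $\mathrm C^1$-small perturbation of $\mathcal L_{X_{\mathscr g_0}}$ on the fixed Anosov manifold $S^*_{\mathscr g_0}\M$.

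For the \emph{upper bound}, note that at the reference point the canonical Liouville contact form $\alpha_{\mathscr g_0}$ fulfills the hypotheses of Lemma~\ref{lem:perpall}, so \eqref{eq:decom3592306} combined with Proposition~\ref{prop:intro_perp} (applicable because $\dim\M\neq 3$ is precisely $n\neq 2$) yields $m_{\mathcal L_{X_{\mathscr g_0}},T^*(S^*_{\mathscr g_0}\M)}(0)=b_0(\M)+b_1(\M)$, with the resonance $0$ moreover isolated and without Jordan block. The standard upper semicontinuity of the sum of algebraic multiplicities of Pollicott--Ruelle resonances in a fixed small disk around $0$ under smooth perturbations of an Anosov generator---a direct consequence of the microlocal construction of the meromorphic resolvent in \cite{DZ16}---then delivers $m_{\mathcal L_{Y_{\mathscr g}},T^*(S_{\mathscr g}^*\M)}(0)\leq b_0(\M)+b_1(\M)$ for all $(\mathscr g,Y_{\mathscr g})$ sufficiently close to $(\mathscr g_0,X_{\mathscr g_0})$.

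For the \emph{lower bound}, the theorem of Dang and Rivi\`ere \cite{DR17b} asserts that the Pollicott--Ruelle resonant $p$-current spaces $C_p$ of $\mathcal L_{Y_{\mathscr g}}$ at eigenvalue $0$ form a complex under $d$ quasi-isomorphic to the de Rham complex of $S_{\mathscr g}^*\M$. Hence $\dim C_1\geq\dim\ker(d\colon C_1\to C_2)\geq b_1(S_{\mathscr g}^*\M)$. A short Gysin-sequence argument for the fibration $S^n\to S^*\M\to\M$ identifies $b_1(S_{\mathscr g}^*\M)$ with $b_1(\M)$ in both regimes permitted by $\dim\M\neq 3$: for $n\geq 3$ the term $H^{1-n}(\M)$ in the Gysin sequence vanishes, while for $n=1$ the nonzero Euler class of the hyperbolic surface forces the connecting map $H^1(S^*\M)\to H^0(\M)$ to be zero.

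For the \emph{equality in the contact-type case}, assuming $\M$ connected so that $b_0(\M)=1$ (otherwise one argues component-wise), the hypothesis supplies $\alpha_{\mathscr g}\in C_1$ with $d\alpha_{\mathscr g}\in C_2\setminus\{0\}$, that is, $\alpha_{\mathscr g}\notin\ker(d\colon C_1\to C_2)$. Combining with the lower bound $\dim\ker(d\colon C_1\to C_2)\geq b_1(\M)$ from the previous paragraph yields $\dim C_1\geq b_1(\M)+1=b_0(\M)+b_1(\M)$, matching the upper bound. The main obstacle is the upper semicontinuity under the joint perturbation of metric and vector field: both the ambient manifold $S_{\mathscr g}^*\M$ and the target bundle $T^*(S_{\mathscr g}^*\M)$ vary with $\mathscr g$, so the anisotropic Sobolev spaces underlying the microlocal definition of the Pollicott--Ruelle resonances must be chosen to depend smoothly on $\mathscr g$ through the identification of sphere bundles, and one must verify that the perturbation stays in the regime where the resolvent bounds giving upper semicontinuity are uniform.
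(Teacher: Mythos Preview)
Your approach is essentially the same as the paper's: reduce to the fixed manifold $S^*_{\mathscr g_0}\M$ via radial rescaling, compute the reference multiplicity $m_{\mathcal L_{X_{\mathscr g_0}},T^*(S^*_{\mathscr g_0}\M)}(0)=b_0(\M)+b_1(\M)$ from Proposition~\ref{prop:intro_perp} and Lemma~\ref{lem:perpall}, use upper semicontinuity for the upper bound, and use Dang--Rivi\`ere \cite{DR17b} for the lower bounds (with the extra $b_0$ coming from $\alpha_{\mathscr g}\notin\ker d$ in the contact-type case).

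The one substantive discrepancy is the upper-semicontinuity step. You first call it ``a direct consequence of the microlocal construction of the meromorphic resolvent in \cite{DZ16}'' and then, correctly, flag it as the main obstacle. It is not a direct consequence of \cite{DZ16}: the anisotropic Sobolev spaces built there depend on the specific Anosov vector field, so one cannot a priori compare eigenvalue counts for $\mathcal L_{Y_{\ast\mathscr g}}$ and $\mathcal L_{X_{\mathscr g_0}}$ in a common functional-analytic framework. The paper resolves exactly this point by invoking Bonthonneau \cite{Bon18}, who constructs \emph{flow-independent} anisotropic spaces valid for all vector fields in a $\mathrm C^1$-neighborhood of a given Anosov field; the vector-valued extension is straightforward. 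With this reference your argument is complete.

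Two minor remarks. For $b_1(S^*_{\mathscr g}\M)=b_1(\M)$ the paper simply cites Chern--Spanier \cite{CS50}; your Gysin-sequence argument is fine and in the surface case your appeal to the nonzero Euler class is legitimate since the Euler class of $S^*\M\to\M$ is a topological invariant independent of the metric. Also, you do not need the clause ``with the resonance $0$ moreover isolated'': resonances are always discrete, so one can always choose a disk about $0$ containing no other resonance of $X_{\mathscr g_0}$, and the perturbed multiplicity at $0$ is trivially bounded by the total perturbed count in that disk.
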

\begin{rem}
If $\alpha_{\mathscr g}$ is a contact form and $Y_{\mathscr g}$ is contact with respect to $\alpha_{\mathscr g}$, then, as mentioned above, the resonance multiplicities on the bundles $T^*(S_{\mathscr g}^*\mathcal M)$ and $Y_{\mathscr g}^\perp$ are related by \eqref{eq:decom3592306}. So  Theorem \ref{thm:main2} implies that the relations  \eqref{eq:betti_perp} and \eqref{eq:betti_nonperp} remain valid for simultaneous small perturbations $\mathscr g$ of the metric $\mathscr g_0$ and 
small perturbations $Y_{\mathscr g}$ of the geodesic vector field $X_{\mathscr g}$ within the class of contact vector fields. 
\end{rem}
We prove Theorem \ref{thm:main2} by combining Proposition~\ref{prop:intro_perp}, which has been obtained by a quantum-classical correspondence, with the cohomology results of Dang-Rivi\`ere \cite{DR17b} as well as some recent advances concerning the perturbation theory of Pollicott-Ruelle resonances \cite{Bon18}. 

 The main steps in the proof of Proposition  \ref{prop:intro_perp}, carried out in Section \ref{sec:constant}, can be roughly summarized as follows:\begin{enumerate}
\item First we prove that $m_{\mathcal L_X, X^\perp}(0)=m_{\mathcal L_X, E_+^\ast}(0)$, i.e., every generalized resonant state $u$ of the resonance zero actually lives only in the dual stable subbundle  $E_+^\ast\subset X^\perp$.
\item Then we show that $u$ lies in the kernel of the horocyclic operator $\mathcal U_-$ (defined in Section \ref{sec:horocycle}), which means that it is a generalized \emph{first band} resonant state. This is achieved by observing that $\mathcal U_- u$ is a generalized resonant state on the tensor bundle $E_+^\ast\otimes E_-^\ast\cong E_-^\ast\otimes E_-^\ast$. Decomposing   $\mathcal U_- u$ into a symmetric and an antisymmetric part, we apply \cite{dfg} to show that the symmetric part must be zero and \cite{gaillard1} to show that the antisymmetric part must be zero.
\item By \cite{kuester-weich18} there are no first band Jordan blocks, so it follows that $u$ is actually a resonant state. 
\item Since $u$ is a first band resonant state, $u$ corresponds to a distributional one-form $u_\infty$ on the sphere $S^n$, the boundary at infinity of the hyperbolic space $\mathbb H^{n+1}$. Then $u_\infty$ is invariant under a certain representation of the lattice $\Gamma\subset \SO(n+1,1)_0$ on the space of distributional one-forms on $S^n$, where $\M=\gam \mathbb H^{n+1}$. 
\item We apply again  Gaillard's result \cite{gaillard1}; it says that $u_\infty$ is mapped by a Poisson transform to a harmonic one-form on $\M$ which is non-zero if $u$ is non-zero and that all  harmonic one-forms on $\M$ arise this way.
\end{enumerate}
In Section \ref{sec:nonconst} we then carry out the proof of Theorem \ref{thm:main2} along roughly the following steps:
\begin{enumerate}
\item Using the ``fiber-wise rescaling'' diffeomorphism between the unit co-sphere bundles $S^\ast_{\mathscr g}\M$, $S^\ast_{\mathscr g_0}\M$ with respect to two Riemannian metrics ${\mathscr g}$, ${\mathscr g}_0$ on $\M$, we transfer the initial setup involving vector fields $Y_{\mathscr g}\in\Gamma^\infty(T(S^\ast_{\mathscr g}\M))$ to an equivalent  setup involving vector fields $Y_{\ast\mathscr g}\in\Gamma^\infty(T(S^\ast_{\mathscr g_0}\M))$ on the $\mathscr g$-independent space $S^\ast_{\mathscr g_0}\M$. This transfer is such that if $\mathscr g$ is close to $\mathscr g_0$ and $Y_{\mathscr g}$ is close to the geodesic vector field $X_{\mathscr g}$, then $Y_{\ast\mathscr g}$ is close to $X_{\mathscr g_0}$. We choose $\mathscr g_0$  of constant negative curvature.

\item By applying Bonthonneau's result \cite{Bon18} on perturbations of Anosov vector fields to the transferred setup on $S^\ast_{\mathscr g_0}\M$, we obtain the inequality $m_{\mathcal L_{Y_{\mathscr g}},T^*(S_{\mathscr g}^*\mathcal M)}(0)\leq m_{\mathcal L_{X_{\mathscr g_0}},T^*(S_{\mathscr g_0}^*\mathcal M)}(0)$ for all $\mathscr g$ close enough to $\mathscr g_0$ and all vector fields $Y_{\mathscr g}$ close enough to $X_{\mathscr g}$ (they are then automatically Anosov).

\item From the results of Dang-Rivi\`ere \cite{DR17b} we get the lower bound $b_1(\M)\leq m_{\mathcal L_{Y_{\mathscr g}},T^*(S_{\mathscr g}^*\mathcal M)}(0)$ for every negatively curved Riemannian metric $\mathscr g$ on $\M$ and every Anosov vector field $Y_{\mathscr g}$ on $S^\ast_{\mathscr g}\M$, and this bound improves to $b_0(\M)+b_1(\M)$ if $Y_{\mathscr g}$ preserves a non-closed one-form.

\item  In the proof of Proposition  \ref{prop:intro_perp} we observe $m_{\mathcal L_{X_{\mathscr g_0}},T^*(S_{\mathscr g_0}^*\mathcal M)}(0)=b_0(\M)+b_1(\M)$ if $\dim \M\neq 3$.
\end{enumerate}

\subsubsection*{Acknowledgements}
After the appearance of \cite{kuester-weich18}, Semyon Dyatlov raised the question whether the vector-valued quantum-classical correspondence might shed light on generalizations of \cite{dyatlov-zworski17} to higher dimensions. We are grateful to him for proposing this question as well as for several helpful discussions. Furthermore, we thank Colin Guillarmou, Viet Nguyen Dang, and Gabriel Rivi\`ere for several discussions concerning their works \cite{DR17b, DGRS18} and helpful comments. We  thank the anonymous referees for their valuable remarks and, in particular, for pointing out that Proposition \ref{prop:main2} could be strengthened, which led us to prove Theorem \ref{thm:main2}. This project has received funding from the European Research Council (ERC) under the European Union's Horizon 2020 research and innovation programme (grant agreement No.\ 725967)
as well as from the Deutsche Forschungsgemeinschaft (DFG) through the Emmy Noether group ``Microlocal Methods for Hyperbolic Dynamics'' (grant No.\ WE 6173/1-1). 
\section{Pollicott-Ruelle resonances for geodesic flows}
\subsection{Anosov vector fields and perpendicular forms}\label{sec:flows}
Let $(\mathcal M, \mathscr g)$ be a closed orientable  Riemannian manifold of dimension $n+1$ with negative sectional curvature. 
Then the geodesic flow $\varphi_t$ on the unit co-sphere bundle $S^*\mathcal M$ is an Anosov flow which implies that there is a $d\varphi_t$-invariant H\"older continuous splitting of the tangent bundle $T(S^*\mathcal M)$
\bq
 T(S^*\mathcal M) = E_0\oplus E_+\oplus E_-,\label{eq:Anosovsplitting}
\eq
where $E_0 = \R X$ is the neutral bundle spanned by the geodesic vector field $X$ and $E_{+}$, $E_{-}$ are the stable and unstable bundles,  respectively (see e.g.\  \cite[p.\ 252]{Kni02}). 
Additionally, there is a smooth contact one-form $\alpha \in\Omega^1(S^*\mathcal M)$ which is simply the restriction of the Liouville one-form on $T^*\mathcal M$ to $S^*\mathcal M$. 
It fulfills
\[
 \iota_X\alpha =1,~~\ker(\alpha) = E_+\oplus E_-,~~ d\alpha \tu{ is symplectic on } \ker(\alpha),~~\mathcal L_X\alpha=0,
\]
where $\mathcal L_X$ denotes the Lie derivative. 
Note that the last two properties imply that $\alpha\wedge(d\alpha^n)$ is a nowhere-vanishing flow-invariant non-zero volume form which defines the Liouville measure on $S^*\mathcal M$.  
Using the contact one-form we get a splitting of the cotangent bundle into smooth subbundles
\[
 T^*(S^*\mathcal M) = \R\alpha\oplus X^\perp,~~~X^\perp:=\{(x,\xi)\in T^*(S^*\mathcal M): \xi(X(x)) = 0\}.
\]
We will call the smooth sections of $X^\perp$ perpendicular one-forms and denote their space by $\Omega^1_\perp(S^*\mathcal M)$. 
More generally, we introduce for $p=0,\ldots n$ the space of perpendicular $p$-forms
\[
 \Omega^p_\perp(S^*\mathcal M) := \{\omega\in \Omega^p(S^*\mathcal M): \iota_X\omega=0 \}=\Gamma^\infty(\Lambda^p X^\perp).
\]
By the Anosov splitting, the bundle $X^\perp$ can be further split into
\begin{equation}\label{eq:e_perp_splitting}
X^\perp = E_+^* \oplus E_-^* ,
\end{equation}
where the dual stable and unstable bundles are defined by $E_\pm^*(E_0\oplus E_\mp)=0$. 
In contrast to the smoothness of $X^\perp$, the subbundles $E^*_\pm$ are only H\"older continuous unless $\mathcal M$ is a locally symmetric space of rank one. 

More generally, we can consider an arbitrary Anosov vector field $Y$ on $S^\ast \M$ (again, see e.g.\  \cite[p.\ 252]{Kni02} for the definition), for which we have a splitting of the form \eqref{eq:Anosovsplitting} with $E_0=\R Y$ and we define the bundle
\[
Y^\perp:=\{(x,\xi)\in T^*(S^*\mathcal M): \xi(Y(x)) = 0\}.
\]

\begin{rem}[Complexifications]\label{rem:complex}When addressing spectral questions involving an operator on any of the bundles mentioned so far, or on any subbundle of a tensor power of $T^\ast(S^\ast \M)$, it is often more useful to work with the complexified bundle. For simplicity of notation we shall not explicitly distinguish in the following between real vector bundles and their complexifications. It will be clear from the context whether we refer to the real or the complexified bundle.\end{rem}

\subsection{Pollicott-Ruelle resonances on forms}\label{subsec:rp_res}
Pollicott-Ruelle resonances were introduced by Pollicott \cite{pol85} and Ruelle \cite{Rue86} in order to study mixing properties of hyperbolic flows (as mentioned before).
In the last years it has been found out that these resonances can also be defined as poles of meromorphically continued resolvents (see \cite{Liv04, BL07, GLP13}, \cite{FSj11, DZ16} for approaches using semiclassical analysis and \cite{DG16, BW17} for generalizations to noncompact settings). We follow \cite{DG16} to introduce the notion of Pollicott-Ruelle resonances on an arbitrary smooth complex vector bundle $\mathcal V \to S^*\mathcal M$.
For a vector field $Y\in \Gamma^\infty(T(S^\ast \M))$, a first order differential operator $\mathbf Y$ on $\mathcal V$ is called  \emph{admissible lift} of $Y$ if
\[
 \mathbf Y(f\mathbf u) = (Yf)\mathbf u + f\mathbf Y \mathbf u,~~~~f\in \Cinft(S^*\mathcal M), \;\mathbf u\in\Gamma^\infty(\mathcal V).
\]
This applies in particular to the geodesic vector field $X$, admissible lifts of which will be denoted by $\Xbf$.  
An example of an admissible lift of a vector field $Y$ is the Lie derivative $\mathcal L_Y$  on any $d\varphi_t$-invariant subbundle of $\otimes^p T^*(S^*\mathcal M)$ for some $p\in \N_0$ (taking into account Remark \ref{rem:complex}), where $\varphi_t$ is the flow of $Y$.  
In Section~\ref{sec:constant} we will additionally consider covariant derivatives which are further examples of admissible lifts. 
After choosing a smooth metric on $\mathcal V$ one defines the space $\L^2(S^*\mathcal M, \mathcal V)$. Note that by the compactness of $\mathcal M$ only the norm on this space depends on the choice of the metric but neither does the space nor its topology. Let now $Y$ be an Anosov vector field on $S^*\mathcal M$ and $\mathbf Y$ an admissible lift as above. Then one checks  \cite[Eq.\ (1.10)]{DG16} that there is a constant $C_{\mathbf Y}>0$ such that $\mathbf Y +\lambda:\L^2(S^*\mathcal M, \mathcal V) \to \L^2(S^*\mathcal M, \mathcal V)$
is invertible for $\tu{Re}(\lambda)>C_{\mathbf Y}$. The following statement was proved in the scalar case and for particular vector bundles in \cite{FSj11,DZ16,FT17} and is straightforward to adapt to the case of general vector bundles (see e.g.\ \cite[Thm.\ 1]{DG16}).
\begin{prop}
 The resolvent $R_{\mathbf Y,\mathcal V}(\lambda):=(\mathbf Y+\lambda)^{-1}:
 \L^2(S^*\mathcal M,\V)\to \L^2(S^*\mathcal M,\V)$,  $\tu{Re}(\lambda)\gg0$, has a continuation to the whole complex plane as a meromorphic family of bounded operators
 \[
  R_{\mathbf Y,\mathcal V}(\lambda): \Cinft(S^*\mathcal M,\mathcal V) \to \mathcal D'(S^*\mathcal M,\mathcal V).
 \]
Moreover, for any pole $\lambda_0$ the 
residue operators $\Pi_{\lambda_0} = \tu{res}_{\lambda=\lambda_0}(R_{\mathbf Y,\V}(\lambda))$ have finite rank. 
\end{prop}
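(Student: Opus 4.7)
The plan is to extend the anisotropic Sobolev space strategy of Faure--Sjöstrand and Dyatlov--Zworski from scalar symbols to sections of $\V$, following the template of \cite{FSj11,DZ16} and \cite{DG16,FT17}. The key observation permitting this extension is that any admissible lift $\mathbf{Y}$ has scalar principal symbol: $\sigma_1(\mathbf{Y})(x,\xi)=i\xi(Y(x))\otimes \mathrm{id}_{\V_x}$, as a direct consequence of the Leibniz rule characterizing admissibility. Hence the Hamiltonian flow on $T^\ast(S^\ast\M)$ governing propagation of singularities is exactly the scalar one, and the bundle structure of $\V$ only enters through a zeroth-order endomorphism-valued term, which is tame for microlocal estimates.

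Concretely, I would first construct an escape function $G\in \Cinft(T^\ast(S^\ast\M)\setminus 0)$, homogeneous of degree zero outside a compact set, that is negative on a conic neighborhood of $E_+^\ast$, positive on a conic neighborhood of $E_-^\ast$, and strictly decreasing along the Hamiltonian flow of $\sigma_1(\mathbf{Y})$ off a compact neighborhood of $E_0^\ast$. The existence of $G$ follows from a standard averaging construction once $Y$ is Anosov. For each $s>0$, define the anisotropic Sobolev space $\H^{sG}(S^\ast\M,\V) := A_{sG}^{-1}(\L^2(S^\ast\M,\V))$, where $A_{sG}$ is a bundle-valued pseudodifferential operator, elliptic at infinity, with principal symbol $e^{sG(x,\xi)}\otimes \mathrm{id}_{\V_x}$; these satisfy $\Cinft(S^\ast\M,\V)\subset \H^{sG}\subset \mathcal D'(S^\ast\M,\V)$. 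The central technical step is then to prove that $\mathbf{Y}+\lambda$, on its natural domain, is Fredholm of index zero on $\H^{sG}$ in the half-plane $\tu{Re}(\lambda)>-sc_0$, where $c_0>0$ depends on the Anosov expansion rate of $Y$. This combines elliptic estimates away from the characteristic set $\{\xi(Y)=0\}$, radial-source estimates at $E_+^\ast$, radial-sink estimates at $E_-^\ast$, and propagation of singularities between the two radial sets. In a local trivialization, $\mathbf{Y}$ reads $Y\otimes \mathrm{id}$ plus a bounded endomorphism-valued term; the scalar radial estimates of \cite{DZ16} then apply to the diagonal part, while off-diagonal contributions are a compact perturbation that preserves both Fredholmness and vanishing index.

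Since $\mathbf{Y}+\lambda$ is already known to be invertible on $\L^2\hookrightarrow \H^{sG}$ for $\tu{Re}(\lambda)\gg 0$, analytic Fredholm theory yields a meromorphic continuation of $R_{\mathbf{Y},\V}(\lambda)$ to $\tu{Re}(\lambda)>-sc_0$ as a family of bounded operators on $\H^{sG}$, with each residue $\Pi_{\lambda_0}$ being the finite-rank spectral projector onto the generalized eigenspace at $\lambda_0$. The poles and residues are intrinsic, characterized by distributional solutions of $(\mathbf{Y}+\lambda)u=0$ with wavefront set contained in $E_+^\ast$, so the continuations on the half-planes obtained by varying $s$ agree on overlaps; letting $s\to\infty$ produces a global meromorphic family $\Cinft(S^\ast\M,\V)\to\mathcal D'(S^\ast\M,\V)$ with finite-rank residues, as claimed. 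I expect the principal obstacle to be the bundle-valued radial source/sink estimates with threshold condition linear in $s$, but since these have been carried out in full generality in \cite[\S 3]{DG16}, the remaining adaptation is essentially bookkeeping rather than new microlocal analysis.
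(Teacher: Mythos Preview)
Your proposal is correct and outlines precisely the standard microlocal argument from \cite{FSj11,DZ16,DG16} that underlies this proposition. Note, however, that the paper does not actually prove this statement: it is recorded as a known result, with the remark that the scalar and particular vector-bundle cases are in \cite{FSj11,DZ16,FT17} and that the adaptation to general bundles is straightforward (citing \cite[Thm.~1]{DG16}). Your sketch is therefore not a different route but rather a fleshing-out of what the paper leaves to the references; the key point you correctly isolate---that an admissible lift has scalar principal symbol, so the propagation and radial estimates are governed by the same Hamiltonian flow as in the scalar case---is exactly why the paper can dismiss the extension as routine.
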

\begin{definition}
 The poles of $R_{\mathbf Y,\V}(\lambda)$ are called  
 \emph{Pollicott-Ruelle resonances} of $\mathbf Y$. Given a resonance $\lambda_0$, 
 the finite-dimensional space $\Res_{\mathbf Y, \mathcal V}(\lambda_0):= 
 \tu{ran}(\Pi_{\lambda_0})\subset \mathcal D'(S^\ast\mathcal M, \mathcal V)$ is the space 
 of \emph{generalized Pollicott-Ruelle resonant states} and we call 
 $m_{\mathbf Y,\mathcal V}(\lambda_0):= \dim_\C \Res_{\mathbf Y, \mathcal V}(\lambda_0)$ 
 the \emph{multiplicity} of the resonance $\lambda_0$.
 
 If $\V=S^*\mathcal M\times \C$ is the trivial line bundle and $\mathbf Y=Y$, then we write just $\Res_{Y}(\lambda_0)$ and $m_{Y}(\lambda_0)$. 
\end{definition}
For any resonance $\lambda_0$ there exists a number $J(\lambda_0)\in\N$ such that the generalized resonant states have the following alternative description \cite[Theorem 2]{DG16}:
\begin{equation}\label{eq:WF_characterization}
 \Res_{\mathbf Y,\mathcal V}(\lambda_0) =\{u\in\mathcal D'(S^*\mathcal M,\mathcal V): (\mathbf Y+\lambda_0)^{J(\lambda_0)} u =0,~\tu{WF}(u)\subset E_+^*\}.
\end{equation}
If $J(\lambda_0) =1$ we say that the resonance \emph{has no Jordan block}. Otherwise, the space of \emph{Pollicott-Ruelle resonant states} $\tu{res}_{\mathbf Y,\mathcal V}(\lambda_0) := \ker(\mathbf Y+\lambda_0)\cap \Res_{\mathbf Y,\mathcal V}(\lambda_0)$ is a proper subspace of $\Res_{\mathbf Y,\mathcal V}(\lambda_0)$. 

Note that the co-sphere bundle $S^*\mathcal M$, the vector fields $Y$ on it (in particular, the geodesic vector field $X$), as well as their resolvents, Pollicott-Ruelle resonances, and associated resonant states and multiplicities depend on the Riemannian metric $\mathscr g$. 
In Section~\ref{sec:nonconst} we will be interested in their variation under perturbations of $\mathscr g$. 
For this reason we will write $S_{\mathscr g}^*\mathcal M$,  $X_{\mathscr g}$, $Y_{\mathscr g}$ in order to emphasize the dependence on $\mathscr g$. 
In the other sections we suppress  the Riemannian metric  in the notation. 
\section{Multiplicities on constant curvature manifolds}\label{sec:constant}
In this section we assume that $(\mathcal M,\mathscr g)$ is a closed orientable  hyperbolic\footnote{I.e., a Riemannian manifold of constant sectional curvature $-1$. Fixing the curvature at $-1$ is a common convention. By trivial rescaling arguments all results in this paper involving the resonance $0$ remain true if the metric is multiplied by a positive constant.}  manifold of dimension $n+1$.
\begin{prop}\label{prop:constant_mult_perp}
 If $n\neq 2$, then 
 \[
 m_{\mathcal L_X,X^\perp}(0) = b_1(\mathcal M).
 \]
 Furthermore, the resonance zero has no Jordan block, and if $n\geq 3$, then zero is the unique leading resonance  and there is a spectral gap.\footnote{I.e., there exists $\delta>0$ such that  $\mathcal L_X$ acting on $X^\perp$ has no resonances with real part in the interval $(-\delta,\infty)$ except the resonance zero.}
\end{prop}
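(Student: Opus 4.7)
My plan is to realize the outline sketched in the introduction through five steps. First, I would reduce from $X^\perp$ to the dual stable subbundle $E_+^\ast$: using the wavefront set characterization \eqref{eq:WF_characterization}, every generalized resonant state $u$ satisfies $\mathrm{WF}(u)\subset E_+^\ast$, and the smoothness of the splitting $X^\perp=E_+^\ast\oplus E_-^\ast$ combined with a radial estimate on the $E_-^\ast$-component would force the $E_-^\ast$-part of $u$ to vanish. This yields the reduction $m_{\mathcal L_X,X^\perp}(0)=m_{\mathcal L_X,E_+^\ast}(0)$.

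Second, I would prove that $u$ is a \emph{first band} resonant state, i.e.\ lies in the kernel of the horocycle operator $\mathcal U_-$ introduced in Section \ref{sec:horocycle}. The image $\mathcal U_- u$ is itself a generalized resonant state of $\mathcal L_X$ on the bundle $E_+^\ast\otimes E_-^\ast$. Using the natural isomorphism $E_+^\ast\otimes E_-^\ast\cong E_-^\ast\otimes E_-^\ast$ available in the constant curvature setting, I would decompose $\mathcal U_- u$ into its symmetric and antisymmetric components. The vanishing of the symmetric part should follow from the absence of nontrivial symmetric-tensor resonant states at zero proved in \cite{dfg}, while the vanishing of the antisymmetric part should follow from an analogous classification of antisymmetric two-tensor resonant states extracted from \cite{gaillard1}. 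Precisely here the hypothesis $n\neq 2$ enters, explaining the exceptional behavior flagged in Remark \ref{rem:exceptional}.

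Third, the absence of Jordan blocks for first band resonant states established in \cite{kuester-weich18} would then upgrade $u$ from a generalized to a genuine resonant state, which also yields the no-Jordan-block statement of the proposition. Fourth, the first band resonant states correspond under a boundary value map to $\Gamma$-invariant distributional one-forms $u_\infty$ on the sphere at infinity $S^n=\partial\mathbb H^{n+1}$, where $\Gamma\subset \SO(n+1,1)_0$ is the uniform lattice with $\mathcal M=\Gamma\backslash\mathbb H^{n+1}$. Fifth, Gaillard's Poisson transform \cite{gaillard1} identifies this space of $\Gamma$-invariant distributional one-forms on $S^n$ with the space of harmonic one-forms on $\mathcal M$, whose dimension is $b_1(\mathcal M)$ by Hodge theory.

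The main obstacle will be the second step: it demands the simultaneous vanishing of two tensor components of $\mathcal U_- u$ using two distinct deep inputs on tensor-valued resonances, and the antisymmetric component is exactly where dimension $n=2$ becomes exceptional, so a careful bookkeeping of representations is needed to see that both halves die only once $n\neq 2$. The remaining claims concerning the leading resonance and spectral gap for $n\geq 3$ I would deduce from the explicit classification of first band resonances via principal series representations of $\SO(n+1,1)_0$, which places all other resonances of $\mathcal L_X$ on $X^\perp$ in a half-plane $\{\Re\lambda<-\delta\}$ for some $\delta>0$ depending only on $n$.
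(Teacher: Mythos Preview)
Your outline follows the paper's five-step strategy, but two steps have genuine gaps in the mechanism you propose.

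In Step~1, the wavefront condition $\mathrm{WF}(u)\subset E_+^\ast$ constrains only the \emph{phase-space singularities} of $u$ as a distribution on $S^\ast\M$; it says nothing about which summand of the bundle splitting $X^\perp=E_+^\ast\oplus E_-^\ast$ the section $u$ takes values in. A smooth section of $E_-^\ast$, for instance, has empty wavefront set and trivially satisfies the condition. No radial estimate bridges these two unrelated roles of $E_+^\ast$. The paper's argument is different and elementary: on $E_-^\ast$ one has $\mathcal L_X=\nabla_X+1$ by Lemma~\ref{lem:shift451}, so $\Res_{\mathcal L_X,E_-^\ast}(0)=\Res_{\nabla_X,E_-^\ast}(1)$, and the latter vanishes because $\nabla_X$ is antisymmetric in $\L^2(E_-^\ast)$ and hence has no resonances with positive real part.

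In Step~2, the decomposition of $E_-^\ast\otimes E_-^\ast$ has \emph{three} pieces: the trace-free symmetric part $S^2_0(E_-^\ast)$, the antisymmetric part $\Lambda^2E_-^\ast$, and the pure-trace line $\C\,\tilde{\mathscr g}|_{E_-\times E_-}$. The input from \cite{dfg} kills only $S^2_0$, and Gaillard's transform handles $\Lambda^2$ for $n\neq 2$ (while for $n=2$ the bundle $\Lambda^2E_-^\ast$ is again a trivial line). There remains the possibility $\mathcal U_- u=c\,\tilde{\mathscr g}|_{E_-\times E_-}$ (or $c\,\Omega_{E_-}$ when $n=2$) with $c$ locally constant, coming from the scalar leading resonance. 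This is excluded by a separate adjoint argument: since $\mathcal U_-^\ast=-\mathcal T\circ\mathcal U_-$ and $\tilde{\mathscr g}|_{E_-\times E_-}$ is covariantly constant, one has $\mathcal U_-^\ast(\tilde{\mathscr g}|_{E_-\times E_-})=0$, and pairing against $u$ forces $c=0$. You should also note that the hypothesis $n\neq 2$ is needed not only in the antisymmetric piece but, more essentially, in Step~5: Gaillard's Poisson transform on one-forms (Theorem~\ref{thm:gaillard} with $p=1$, $\lambda=p$) is bijective only when $\lambda\neq n-p$, i.e.\ when $n\neq 2$.
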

The first part of this result will be a central ingredient for Theorem~\ref{thm:main2}. We will prove Proposition \ref{prop:constant_mult_perp} using a quantum-classical correspondence. Such correspondences have recently been developed in various contexts (see \cite{dfg} for compact hyperbolic manifolds \cite{GHW18, Had18} for the convex co-compact setting and \cite{GHW20}for generalizations to general rank one manifolds). We will use the general framework for vector bundles developed by the authors in \cite{kuester-weich18}. Additionally we use a Poisson transform due to Gaillard \cite{gaillard1} and combining both ingredients allows us to construct an explicit bijection between the Pollicott-Ruelle resonant states in perpendicular one forms and the kernel of the Hodge Laplacian. 
\begin{rem}\label{rem:exceptional}
The dimension $n+1=3$ is an exception where the multiplicity is given by $m_{\mathcal L_X, X^\perp}(0)= 2b_1(\mathcal M)$. The deeper reason for this exception is that  Gaillard's Poisson transform is not bijective in this case. The exceptional case could also be treated with our methods by a more detailed analysis of Gaillard's Poisson transform. This special case has however been worked out already in \cite[Proposition 7.7]{DGRS18} by factorizations of zeta functions, so we refrain from taking on the additional effort.  
\end{rem}
 
A crucial role in these quantum-classical correspondences is played by the so-called (generalized) first band resonant states
\begin{equation}\label{eq:first_band_states}
 \Res^{\mathrm{1st}}_{\mathbf X,\mathcal V}(\lambda_0) :=\Res_{\mathbf X,\mathcal V}(\lambda_0) \cap \ker\mathcal U_-,~~~~\tu{res}^{\mathrm{1st}}_{\mathbf X,\mathcal V}(\lambda_0) :=\tu{res}_{\mathbf X,\mathcal V}(\lambda_0) \cap \ker\mathcal U_-,
\end{equation}
where $\mathcal U_-$ is the horocycle operator which we will introduce below in \eqref{eq:horocycle_op}. 
Roughly speaking, first band resonant states are resonant states that are constant in the unstable directions. 
In the process of proving Proposition~\ref{prop:constant_mult_perp} we 
observe in Section \ref{prop:constant_mult_perp} that in any dimension $n+1$, including $n+1=3$, one has
\bq
\Res^{\mathrm{1st}}_{\mathcal L_X,X^\perp}(0)=\Res_{\mathcal L_X,X^\perp}(0),\label{eq:all1stband}
\eq
which means that all resonant states of the resonance zero are first band resonant states, even though for $n=1$  zero is not necessarily the leading resonance. Furthermore, we establish the following result:
\begin{prop}\label{prop:p_forms_Betti}
 On any closed orientable  hyperbolic manifold $\mathcal M$ of dimension $n+1$ and for any $p=0,\ldots, n$ with $p\neq n/2$, one has
\begin{equation}
\dim_\C \mathrm{Res}^{\mathrm{1st}}_{\mathcal L_X,\Lambda^p E^\ast_+}(0) =\dim_\C\mathrm{Res}^{\mathrm{1st}}_{\mathcal L_X,\Lambda^p E^\ast_-}(-2p)=b_p(\M). \label{eq:mainform}
\end{equation}
\end{prop}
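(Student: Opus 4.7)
The plan is to extend the strategy used for Proposition~\ref{prop:constant_mult_perp} to all degrees $p$: construct a concrete bijection
\[
\mathrm{Res}^{\mathrm{1st}}_{\mathcal L_X,\Lambda^p E^\ast_+}(0) \;\longleftrightarrow\; \mathcal H^p(\M)
\]
between first band resonant $p$-forms and harmonic $p$-forms on $\M$, and invoke Hodge theory to conclude $\dim \mathcal H^p(\M)=b_p(\M)$. The bijection factors through $\Gamma$-invariant distributional boundary $p$-forms on $S^n=\partial\mathbb H^{n+1}$ and is assembled from two independent ingredients: the vector-valued quantum-classical correspondence of \cite{kuester-weich18} and the Poisson transform of Gaillard \cite{gaillard1}.

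First, I would apply the vector-valued quantum-classical correspondence from \cite{kuester-weich18} to the bundle $\Lambda^p E_+^\ast$. Lifting to the universal cover $\mathbb H^{n+1}$ and using that $E_+^\ast$ is naturally isomorphic to a bundle pulled back from the boundary sphere $S^n$ along the stable foliation, the condition that a generalized resonant state $u$ lies in $\ker\mathcal U_-$ (i.e., is a first band state) means that $u$ is constant in the unstable horocyclic directions. The correspondence then identifies $\mathrm{Res}^{\mathrm{1st}}_{\mathcal L_X,\Lambda^p E_+^\ast}(0)$ with the space of $\Gamma$-invariant distributional $p$-forms on $S^n$ of appropriate equivariant type, where $\Gamma\subset\SO(n+1,1)_0$ is the uniformizing lattice. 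This is the direct analogue of step (4) in the proof sketch of Proposition~\ref{prop:intro_perp}, except that one must also verify the analogue of \eqref{eq:all1stband} in this higher-form setting, i.e., that no non-first-band resonant states contribute at level $p$; this should follow from the same argument as in the $p=1$ case.

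Second, I would apply Gaillard's Poisson transform \cite{gaillard1}, which in the regime $p\neq n/2$ is a bijection between $\Gamma$-invariant distributional $p$-forms on $S^n$ (of the relevant type) and harmonic $p$-forms on $\M$. Composing with the identification of the previous step yields the bijection $\mathrm{Res}^{\mathrm{1st}}_{\mathcal L_X,\Lambda^p E_+^\ast}(0)\cong\mathcal H^p(\M)$, and Hodge theory gives $\dim_\C\mathcal H^p(\M)=b_p(\M)$. For the second equality in \eqref{eq:mainform}, I would use the flow-reversal symmetry: replacing $X$ by $-X$ interchanges the roles of the stable and unstable bundles, $E_+^\ast\leftrightarrow E_-^\ast$, and shifts the relevant resonance spectrum by $-2p$ (the shift arises because $E_-^\ast$ is acted upon by the flow with an expansion rate opposite to that of $E_+^\ast$, contributing a weight $-2p$ on $\Lambda^p E_-^\ast$). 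Hence first band resonant states of $\mathcal L_X$ on $\Lambda^p E_+^\ast$ at $0$ are in bijection with those of $\mathcal L_X$ on $\Lambda^p E_-^\ast$ at $-2p$.

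The main obstacle, and the reason for the assumption $p\neq n/2$, is the failure of Gaillard's Poisson transform to be bijective in the middle degree, exactly analogous to the exception for $n=2$ in Proposition~\ref{prop:intro_perp}/Remark~\ref{rem:exceptional}. A subsidiary technical point is to verify carefully that in degree $p$ every generalized resonant state at $0$ on $\Lambda^p E_+^\ast$ is in fact a first band state, which requires controlling the horocycle operator $\mathcal U_-$ on the higher tensor bundles via the arguments of \cite{dfg}. Once these two points are settled, the remainder of the proof reduces to bookkeeping and to citing \cite{kuester-weich18} and \cite{gaillard1}.
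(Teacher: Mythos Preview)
Your overall strategy for the first equality is correct and matches the paper: identify first band resonant states with $\Gamma$-invariant boundary distributions via \cite{kuester-weich18}, then push through Gaillard's Poisson transform to harmonic $p$-forms. However, there are two genuine problems.

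First, you misread the statement. The proposition concerns only $\mathrm{Res}^{\mathrm{1st}}$, the space of generalized resonant states that \emph{already} lie in $\ker\mathcal U_-$. You do \emph{not} need to show that every resonant state at $0$ on $\Lambda^p E_+^\ast$ is a first band state; that analogue of \eqref{eq:all1stband} is neither required nor claimed for general $p$. The actual technical point you are missing is the passage from $\mathrm{res}^{\mathrm{1st}}$ to $\mathrm{Res}^{\mathrm{1st}}$, i.e.\ ruling out Jordan blocks \emph{within} the first band. The paper handles this by invoking \cite[Theorem 6.2]{kuester-weich18}, and this is precisely where the hypothesis $p\neq n/2$ enters a second time (in addition to Gaillard's bijectivity).

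Second, your flow-reversal argument for the second equality is incorrect. Replacing $X$ by $-X$ swaps $E_\pm$, but it also changes the wave front condition (resonances of $-X$ have wave front in the old $E_-^\ast$, so they are \emph{co}-resonances of $X$) and turns the first band condition $\ker\mathcal U_-$ into $\ker\mathcal U_+$. You do not land in $\mathrm{Res}^{\mathrm{1st}}_{\mathcal L_X,\Lambda^p E_-^\ast}(-2p)$, and no $-2p$ shift appears naturally this way. The paper's route is cleaner and different: since $\Ad(M)|_{\nL^+}$ and $\Ad(M)|_{\nL^-}$ are both the defining representation of $\SO(n)$, the bundles $\Lambda^p E_+^\ast$ and $\Lambda^p E_-^\ast$ are associated to the \emph{same} $M$-representation $\tau_p$. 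Lemma~\ref{lem:shift451} gives $\mathcal L_X=\nabla_X-p$ on the $E_+^\ast$-side and $\mathcal L_X=\nabla_X+p$ on the $E_-^\ast$-side, so both $\mathrm{res}^{\mathrm{1st}}_{\mathcal L_X,\Lambda^p E_+^\ast}(0)$ and $\mathrm{res}^{\mathrm{1st}}_{\mathcal L_X,\Lambda^p E_-^\ast}(-2p)$ equal $\mathrm{res}^{\mathrm{1st}}_{\nabla_X,\V_{\tau_p}}(-p)$, and Proposition~\ref{prop:boundary_distribution} then maps both to the \emph{same} space of $\Gamma$-invariant boundary distributions. That is the source of the $-2p$.
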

We consider this result to be of independent interest  because it shows that also the higher Betti numbers can be recovered by considering Pollicott-Ruelle resonant states on certain vector bundles that are invariant under the horocycle transformation. Again the statement is obtained by constructing an explicit isomorphism onto the kernel of the Hodge Laplacian. 
\subsection{Description of the geometry of $\M$ in Lie-theoretic terms}\label{subsec:Lie}
Any closed orientable  connected hyperbolic manifold $\mathcal M$ of dimension $n+1$ can be written as a bi-quotient $$\mathcal M =\Gamma \backslash \mathbb{H}^{n+1}= \Gamma \backslash G/ K,$$ where $G=\SO(n+1,1)_0$,\footnote{Here the subscript $0$ indicates the identity component.} $K\cong\SO(n+1)$, and $\Gamma \subset G$ is a cocompact torsion-free discrete subgroup. $\mathcal M$ is thus an example of a Riemannian locally symmetric space of rank one. There exists a very efficient  Lie-theoretic language to describe the structure of $\mathcal M$, the co-sphere bundle $S^*\mathcal M$, as well as the invariant vector bundles which we introduce in this subsection.  For more details we refer the reader to 
\cite{GHW20,kuester-weich18} and for background information to the textbooks \cite{knapp,helgason78}. In the following we shall introduce the required abstract language in a quite concrete way, tailored to the particular group $G=\SO(n+1,1)_0$. 

The Lie algebra $\g=\so(n+1,1)$ of $G$ can be explicitly realized as a matrix algebra:
\bq\begin{split}
\g=\so(n+1,1)&=\Big\{\begin{pmatrix}k & p\\
p^T & 0 \end{pmatrix}:k\in \so(n+1),\; p\in \R^{n+1} \Big\}\\
&=\Big\{\begin{pmatrix}k & 0\\
0 & 0 \end{pmatrix}:k\in \so(n+1) \Big\}\oplus \Big\{\begin{pmatrix}0 & p\\
p^T & 0 \end{pmatrix}:p\in \R^{n+1} \Big\}\\
&=:\k \oplus \p,\label{eq:gmatrices}
\end{split}
\eq
where $\so(n+1)$ is the algebra of all real skew-symmetric matrices. The involution $\theta:\g\to\g$ given by $\theta X=-X^T$, $X\in \g$, is called Cartan involution.  The subspaces $\k$ and $\p$ are the eigenspaces of $\theta$ with respect to the eigenvalues $1$ and $-1$, respectively. $\k$ is the Lie algebra of the group $$K:=\exp(\k)\subset G,$$
where $\exp$ denotes the matrix exponential. We have $K\cong\SO(n+1)$. The splitting $\g=\k\oplus \p$ is called Cartan decomposition. This decomposition is $\Ad(K)$-invariant, where $\Ad(K)$ is the action of the matrix group $K$ on the matrix algebra $\k$  by conjugation. 

The tangent bundle $T\M=T(\Gamma\backslash G/K)$ can then be identified with the associated vector bundle $\Gamma\backslash G\times_{\Ad(K)} \p$, and similarly we identify $T^\ast\M= \Gamma\backslash G\times_{\Ad^\ast(K)} \p^\ast$, where $\Ad^\ast(K)$ is the dual representation of $\Ad(K)$. 

Via the Killing form  $\mathfrak{B}:\g\times \g\to \R$, which is given  explicitly by $\mathfrak{B}(X,Y)=2n\, \mathrm{tr}(X,Y)$, and the Cartan involution $\theta$ we define an $\Ad(K)$-invariant inner product $\eklm{\cdot,\cdot}$ on $\g$ by $$\eklm{X,Y}:=-(2n)^{-1}\mathfrak{B}(X,\theta Y)=\mathrm{tr}(XY^T),\qquad X,Y\in \g.$$  
The restriction of $\eklm{\cdot,\cdot}$ to $\p\times \p$ then defines a Riemannian metric of  constant curvature $-1$ on 
$\mathcal M$. We carry over the inner product to $\g^\ast$ using the isomorphism $\g\cong \g^\ast$ given by $X\mapsto \eklm{X,\cdot}$. 

We next want to describe the structure of the co-sphere bundle $S^*\mathcal M$ and the Anosov vector bundles $E_{0/+/-}$. To this end, we note that there is a maximal one-dimensional abelian subalgebra $\mathfrak{a} \subset \p$, given explicitly by 
\[
\aL=\Big\{\begin{pmatrix}0 & p\\
p^T & 0 \end{pmatrix}: p^T=(0,\ldots,0,H),H\in \R \Big\}\subset \g.
\]
We will denote the element in $\aL$ for which $H=1$ in the description above by $H_0$ and we identify $$\aL\cong\R$$ by mapping $H_0$ to $1$. Defining subspaces $\nL^\pm\subset \g$ by
\bq
\nL^\pm:=\Bigg\{\begin{pmatrix}0 & v & \mp v\\
-v^T & 0 & 0\\
\mp v^T & 0 &0 \end{pmatrix}: v\in \R^n \Bigg\},\label{eq:defn}
\eq
we see from \eqref{eq:gmatrices} that one has two decompositions
\[
\g=\k\oplus\aL\oplus\nL^+= \k\oplus\aL\oplus \nL^-.
\]
They are called Iwasawa decompositions. The spaces $\nL^\pm$ are  characterized by the property
\bq
[H_0,Y]=\pm Y\qquad\forall\; Y\in \nL^\pm,\label{eq:commrelroot}
\eq
and in fact they are the largest subspaces of $\g$ with these properties. In more abstract terms, the spaces $\nL^\pm$ are the root spaces with respect to the roots $\pm\alpha_0$, where $\alpha_0\in \aL^\ast$ is the element that maps $H_0$ to $1$. We will identify
\[
\nL^\pm\cong \R^n
\]
by mapping each matrix as in \eqref{eq:defn} to the vector $v$. 
Also on the group level there are two corresponding Iwasawa decompositions 
$G=KAN^+= KAN^-.$ 
 Here $N^\pm:=\exp(\nL^\pm)\subset G$ and $A:=\exp(\aL)\subset G$ are the matrix subgroups with Lie algebras $\nL^\pm$ and $\aL$, respectively.  For each group element $g\in G$ we now have unique Iwasawa ($+$) and opposite Iwasawa ($-$) decompositions
\begin{align}\begin{split}
g&=k^+(g)a^+(g)n^+(g)=k^+(g)\exp(H^+(g))n^+(g)\\
&=k^-(g)a^-(g)n^-(g)=k^-(g)\exp(H^-(g))n^-(g),\label{eq:iwasawadecomp}
\end{split}
\end{align}
where  $\exp(H^\pm(g))=a^\pm(g)$. In more concrete terms, this means that each matrix $g$ in $G$ can be written in a unique way as a product of three matrices in $K$, $A$, and $N^\pm$, respectively.  Assigning to each matrix in $G$ these unique matrices provides us with maps
\begin{equation}
k^\pm: G\to K,\qquad a^\pm: G\to A,\qquad H^\pm: G\to \aL,\qquad n^\pm: G\to N^\pm.\label{eq:Iwasawamaps}
\end{equation}
In addition, we define the group $$M:=\{m\in K:[m,a]=0\;\forall\; a\in A\}=\{m\in K:\Ad(m)(H)=0\;\forall\; H\in \aL\}\subset K$$ 
and let $\m$ be the Lie algebra of $M$. Explicitly, we have
\[
\m=\Bigg\{\begin{pmatrix}m & 0 & 0\\
0 & 0 & 0\\
0 & 0 &0 \end{pmatrix}: m\in \so(n) \Bigg\}\subset \k,\qquad M=\exp(\m)\cong \SO(n).
\]
The groups $N^\pm$ are normalized by $A$ and $M$. In fact, when identifying $\nL^\pm\cong \R^n$ as above, then the $\Ad(M)$-action on $\nL^\pm\cong \R^n$ is just the defining representation of $\SO(n)$ on $\R^n$. 
We have the so-called Bruhat decomposition 
\begin{equation}
\g=\m\oplus \aL\oplus  \nL^+\oplus\nL^- \label{eq:bruhat}
\end{equation}
which turns out to be invariant under the $\Ad(M)$-action. 

The co-sphere bundle $S^*\mathcal M$ can be identified with $\Gamma \backslash G/M$. Indeed, the element $\alpha_0\in \aL^\ast\subset \p^\ast$ introduced above fulflls $\norm{\alpha_0}=1$ and
\[
\Gamma \backslash G/M\owns \Gamma gM\mapsto [\Gamma g,\alpha_0]\in S^*\mathcal M\subset T^\ast\M=\Gamma\backslash G\times_{\Ad^\ast(K)} \p^\ast
\]
is a well-defined diffeomorphism.  The Lie group $A\cong \R$ acts from the right on $\Gamma \backslash G/M$ because it commutes by definition with $M$, and this action precisely coincides with the geodesic flow. In particular, the geodesic vector field $X\in \Gamma^\infty(T(S^\ast \M))$ corresponds to the constant function $\bar X:G\to \aL$ with $\bar X(g)=H_0$ for all $g\in G$. Furthermore, the tangent bundle of $S^*\mathcal M$ can be identified as follows:
\begin{equation}
 \label{eq:splitting_associated}
T(S^*\mathcal M) = \Gamma\backslash G\times_{\Ad(M)} (\aL\oplus\nL^+ \oplus\nL^-)=\R X\oplus \underbrace{\Gamma\backslash G\times_{\Ad(M)}\nL^+}_{=E_+}\oplus \underbrace{\Gamma\backslash G\times_{\Ad(M)}\nL^-}_{=E_-}.
\end{equation}
There is an analogous identification of $T^*(S^*\mathcal M)$. The Anosov stable and unstable bundles $E_\pm$ can be  described more concretely using their lifts $\widetilde E_\pm$ to the frame bundle $F\M=\gam G$ along the $M$-orbit projection $F\M=\gam G\to \gam G/M=S\M$: Choosing an orthonormal basis $U^\pm_1,\ldots,U^\pm_n$ of $\nL^\pm$, the constant function $G\to \nL^\pm$ with value $U^\pm_j$ defines a nowhere-vanishing vector field on $F\mathcal M$, denoted also by $U^\pm_j$, and one has
\bq
\widetilde E_\pm=\mathrm{span}_{\R}(U^\pm_1,\ldots,U^\pm_n).\label{eq:liftedEpm}
\eq
The boundary at infinity of the hyperbolic space $\mathbb{H}^{n+1}=G/K$ is diffeomorphic to the sphere $S^{n}$ and can be realized as
\[
\partial_{\infty}\mathbb{H}^{n+1}=K/M=\SO(n+1)/\SO(n)\cong S^{n}.
\]
Consequently, the tangent bundle of $\partial_{\infty}\mathbb{H}^{n+1}$ can be identified with 
\[
T(K/M)=K\times_{\mathrm{Ad}(M)}\m^{\perp_\k},
\]
where $\m^{\perp_\k}\subset \k$ denotes the orthogonal complement of $\m$ in $\k$, given explicitly by
\[
\m^{\perp_\k}=\Bigg\{\begin{pmatrix}0 & v & 0\\
-v^T & 0 & 0\\
0 & 0 &0 \end{pmatrix}: v\in \R^n \Bigg\}.
\]
We can identify $\m^{\perp_\k}\cong \R^n$ by mapping each matrix as above to $v$. The restriction of the representation $\Ad(M)$ to $\m^{\perp_\k}$ is then just the defining representation of $\SO(n)$ on $\R^n$. 

In view of these identifications all vector bundles over $S^*\mathcal M$ of interest in the following are associated vector bundles of the form $\V_\tau:=G\times_\tau V$ with respect to some finite-dimensional complex $M$-representation $(\tau,V)$. 

As all our homogenous spaces are reductive there always exists a canonical connection that we denote by
\begin{equation}
\nabla: \Gamma^\infty(\V_\tau)\to \Gamma^\infty(\V_\tau\otimes T^\ast(S^*\mathcal M)).
\end{equation}
To describe how $\nabla$ is defined, let us regard a section $s\in \Gamma^\infty(\V_\tau)$ as a right-$M$-equivariant function $\bar s\in \Cinft(\Gamma\backslash G,V)$. Moreover, by \eqref{eq:splitting_associated} we regard a vector field $\X\in \Gamma^\infty(T(S^*\mathcal M))$ as a right-$M$-equivariant function $\bar \X\in \Cinft(\Gamma\backslash G,\nL^+\oplus \aL\oplus {\nL^-})$, that is, $\bar \X(\Gamma gm)=\Ad(m^{-1})\bar \X(\Gamma g)$ for every $m\in M$. Then $\nabla$ is defined by the covariant derivative
\begin{align}\begin{split}
\nabla_{\X}(s)(\Gamma gM):=\frac{d}{dt}\Big|_{t=0}\bar s\big(\Gamma ge^{t\bar \X(\Gamma g)}\big).\label{eq:covariantderiv}
\end{split}
\end{align}

\subsection{Horocycle operators}\label{sec:horocycle}
Horocycle operators have been introduced in \cite{dfg} as a crucial tool for establishing quantum-classical correspondences. We already mentioned them in the  definition of the first band resonant states \eqref{eq:first_band_states} in the introduction. 
They are defined as follows: Let $(\mathcal V,\nabla)$ be a vector bundle over $S^\ast \M$ with a connection $\nabla$ and denote by $\widetilde {\tu{pr}}_{E^\ast_-}:\Gamma^\infty(\mathcal V\otimes T^\ast (S^\ast \M))\to \Gamma^\infty(\mathcal V\otimes E^\ast_-) $ the map induced by the fiber-wise orthogonal projection $\tu{pr}_{E^\ast_-}:T^*(S^*\mathcal M) \to E^*_-$ onto the subbundle $E^*_-\subset T^*(S^*\mathcal M)$. Then 
we define the horocyle operator $\mathcal U_-$ of $(\mathcal V,\nabla)$  by composing the connection $\nabla: \Gamma^\infty(\mathcal V) \to \Gamma^\infty(\mathcal V\otimes T^\ast (S^\ast \M))$ with $\widetilde {\tu{pr}}_{E^\ast_-}$:
\begin{equation}\label{eq:horocycle_op}
\mathcal U_-:= \widetilde{\tu{pr}}_{E^\ast_-} \circ \nabla: 
 \Gamma^\infty(\mathcal V) \to \Gamma^\infty(\mathcal V\otimes E^\ast_-).
\end{equation}
By duality, $\mathcal U_-$ extends to distributional sections. In the concrete language of \eqref{eq:liftedEpm} we can express $\mathcal U_-$ as follows: If $\widetilde\mathcal V=\pi^\ast \mathcal V$ is the lift of $\mathcal V$  to the frame bundle, i.e., the pullback bundle with respect to the $M$-orbit projection $\pi:F\M=\gam G\to \gam G/M=S\M$ and if $\tilde u\in \Gamma^\infty(\widetilde\mathcal V)$ is the lift of a section $u\in \Gamma^\infty(\mathcal V)$, then the lift of the section $\mathcal U_- u$ to the bundle $ \widetilde{\mathcal V\otimes E^\ast_-}\cong \widetilde\mathcal V\otimes \widetilde E^\ast_-$ is given by
\[
\widetilde{\mathcal U_- u}=\sum_{j=1}^n\widetilde \nabla_{U_j^-}\tilde u\otimes (U^-_j)^\ast,
\]
where $(U^-_j)^\ast\in \Gamma^\infty(\widetilde E_-^\ast)$ is the dual vector field of $U^-_j$ and $\widetilde \nabla=\pi^\ast  \nabla$ the lifted (i.e., pullback) connection on $\widetilde \V$. 

As already stated in \eqref{eq:first_band_states}, the so-called first band resonant states are defined as those resonant states that are annihilated by $\mathcal U_-$.  
The main technical feature of $\mathcal U_-$ is that it obeys the commutation relation
\bq
 \nabla_X\mathcal U_- - \mathcal U_- \nabla_X  =\mathcal U_-.\label{eq:commUX}
\eq
   This is a consequence of the   commutation relations  \eqref{eq:commrelroot}, the definition \eqref{eq:covariantderiv} of the covariant derivative, and the observation from Section \ref{subsec:Lie} that the geodesic vector field $X$ corresponds to the constant function $\bar X:G\to \aL$ with value $H_0$. If $u\in \Res_{\nabla_X,\mathcal V}(\lambda)$ for some $\lambda\in \C$ and $J\in \N$ is such that $(\nabla_X+\lambda)^Ju=0$, then  \eqref{eq:commUX} implies
\bqn
(\nabla_X+\lambda)^J\mathcal U_-u=(\nabla_X+\lambda)^{J-1}(\nabla_X+\lambda)\mathcal U_-u=(\nabla_X+\lambda)^{J-1}\mathcal U_-(\nabla_X+\lambda+1)u=\cdots=\mathcal U_-(\nabla_X+\lambda+1)^J u,
\eqn 
which proves the following very useful \emph{shifting property} of the horocycle operator $\mathcal U_-$:
\bq
\mathcal U_-\big(\mathrm{Res}_{\nabla_X,\mathcal V}(\lambda)\big)\subset \Res_{\nabla_X,\mathcal V\otimes  E^\ast_-}(\lambda+1),\qquad \mathcal U_-\big(\mathrm{res}_{\nabla_X,\mathcal V}(\lambda)\big)\subset \mathrm{res}_{\nabla_X,\mathcal V\otimes  E^\ast_-}(\lambda+1).\label{eq:Ushift}
\eq

\subsection{First band resonant states and principal  series representations}
As already mentioned above, the homogeneous space $K/M \cong S^{n}$ can be regarded as the \emph{boundary at infinity} of the Riemannian symmetric space $G/K = \mathbb H^{n+1}$ and using the Iwasawa projection we can define a left-$G$-action 
\begin{equation}
g(kM):=k^-(g k)M,\qquad g \in G,\;k\in K.\label{eq:GactionsbaseK}
\end{equation}
Given a finite-dimensional complex $M$-representation $(\tau, V)$ we define the \emph{boundary vector bundle} 
\[
{\V^{\mathcal{B}}_{\tau}} =(K\times_{\tau} V,\pi_{\V^{\mathcal{B}}_{\tau}}),\qquad \pi_{\V^{\mathcal{B}}_{\tau}}([k,v])=kM. 
\]
The total space $K\times_{\tau} V$ of $\V^{\mathcal{B}}_{\tau}$ carries the $G$-action
\begin{equation} 
g[k,v]:=[k^-(gk),v],\qquad g\in G,\;k\in K,\label{eq:Gactionstotal}
\end{equation}
that lifts the $G$-action \eqref{eq:GactionsbaseK} on the base space $K/M$. Consequently, we get an induced action on smooth sections:
\begin{equation}
(g s)(kM):={g}\big(s\big(g^{-1}(kM)\big)\big),\qquad s\in \Gamma^\infty(\V^{\mathcal{B}}_{\tau}),\;g\in G.\label{eq:Gactionstotalsections1}
\end{equation}
If we consider a section $s\in \Gamma^\infty(\V^{\mathcal{B}}_{\tau})$ as a right-$M$-equivariant smooth function $\bar s:K\to V$, the action \eqref{eq:Gactionstotalsections1} corresponds to assigning to $\bar s$ for any $g\in G$ the right-$M$-equivariant smooth function $\overline{g s}:K\to V$ given by
\begin{equation}
\overline{gs}(k)=\bar s(k^-(g^{-1}k)),\qquad g\in G,\;k\in K.\label{eq:Gactionsectionslanglands}
\end{equation}
To describe how the principal series representation of $G$ associated to an $M$-representation $\tau$ and a parameter  $\lambda \in \C$ acts on smooth sections of $\mathcal V_\tau^{\mathcal B}$, let us regard a section $s\in \Gamma^\infty(\mathcal V_\tau^{\mathcal B})$ as a right-$M$-equivariant function $\bar s\in \Cinft(K,V)$. 
We then set\footnote{We use a simplified (non-standard) notation and follow Olbrich's convention as in \cite[between Satz 2.8 and Satz 2.9]{olbrichdiss}. In \cite[p.\ 169]{knapp2}, the definition differs from ours in such a way that $\lambda$ is replaced by $-\lambda$.
Furthermore recall that we identified $\aL\cong \R$ in Section \ref{subsec:Lie}.}
\begin{equation}
\overline{\pi^{\lambda}_{\tau}(g)s}(k):=e^{(\lambda + n/2)H^-(g^{-1}k)}\bar s(k^{-}(g^{-1}k))\label{eq:compactp},\quad s\in \Gamma^\infty(\V^\B_\tau),\; kM\in K/M.
\end{equation}
This representation extends by continuity to a representation $\pi^{\lambda}_{\tau}:G\to \mathrm{End}(\D'(K/M,\V^{\mathcal{B}}_{\tau}))$.
One has the following important relation between first band resonant states and  the $\Gamma$-invariant distributional sections of the boundary vector bundle with respect to the principal series representation $\pi^{-\lambda-n/2}_\tau$.
\begin{prop}[{\cite[Lemma 2.15]{kuester-weich18}}]\label{prop:boundary_distribution}
For each $\lambda\in \C$ there is an explicit isomorphism
\begin{equation}
Q_{\lambda}:\tu{res}^{\mathrm{1st}}_{\nabla_X,\V_{\tau}}(\lambda)\stackrel{\cong}{\longrightarrow}{^\Gamma}\big(\D'(K/M,\V^{\mathcal{B}}_{\tau}), \pi^{-\lambda-n/2}_\tau\big)\label{eq:ruellerepiso}
\end{equation}
onto the space of all distributional sections $u$ of $\V^{\mathcal{B}}_{\tau}$ with $ \pi^{-\lambda-n/2}_\tau(\gamma)u=u$ for every $\gamma\in \Gamma$.
\end{prop}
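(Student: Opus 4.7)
The plan is to construct $Q_\lambda$ as a ``restriction to the boundary at infinity'' map, to reinterpret the two conditions defining a first-band resonant state as a $\Gamma$-equivariance condition for the principal series, and then to produce an explicit inverse via a Poisson-type extension formula. First I would lift $u\in\tu{res}^{\tu{1st}}_{\nabla_X,\V_\tau}(\lambda)$ from $\gam G/M$ to a $\Gamma$-invariant right-$M$-equivariant $V$-valued distribution $\tilde u$ on $G$. By \eqref{eq:covariantderiv} and the fact that $X$ corresponds to the constant function $\bar X\equiv H_0$, the canonical connection $\nabla_X$ acts on $\tilde u$ by the right regular representation along $H_0$ together with a scalar Weyl shift $n/2$ arising from the standard half-sum-of-positive-roots correction that relates the normalised principal series of \eqref{eq:compactp} to raw right translation; hence $(\nabla_X+\lambda)u=0$ becomes $\tilde u(g\exp(tH_0))=e^{-(\lambda+n/2)t}\tilde u(g)$ for all $t\in\R$. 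Similarly, $\mathcal U_-u=0$ translates to right-$N^-$-invariance of $\tilde u$, since the basis $U_j^-$ of $\nL^-$ generates the right regular action on $\tilde u$.

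Next I would invoke the opposite Iwasawa decomposition $g=k^-(g)\exp(H^-(g))n^-(g)$ from \eqref{eq:iwasawadecomp}: right-$N^-$-invariance removes the $n^-(g)$-factor and the $A$-transformation rule integrates the $\exp(H^-(g))$-factor to an exponential, so $\tilde u$ is determined by its restriction $\bar u:=\tilde u|_K$ --- a right-$M$-equivariant distribution on $K$, equivalently a distributional section of $\V^{\mathcal B}_\tau$ --- through the Poisson-type formula
\begin{equation*}
\tilde u(g)=e^{-(\lambda+n/2)H^-(g)}\,\bar u(k^-(g)).
\end{equation*}
Setting $Q_\lambda(u):=\bar u$ and specialising the $\Gamma$-invariance $\tilde u(\gamma g)=\tilde u(g)$ to $g=k\in K$ yields $\bar u(k)=e^{-(\lambda+n/2)H^-(\gamma k)}\bar u(k^-(\gamma k))$, which by \eqref{eq:compactp} is precisely $\pi^{-\lambda-n/2}_\tau(\gamma)\bar u=\bar u$. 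Conversely, given any $\Gamma$-equivariant boundary distribution $\bar u$, the displayed formula defines a distribution $\tilde u$ on $G$ (the maps $k^-$ and $H^-$ being smooth on all of $G$), and the four properties --- distributional well-definedness, right-$M$-equivariance, $\Gamma$-left-invariance, and the two defining PDEs --- follow either directly or by reversing the above computation. This recipe is the candidate for $Q_\lambda^{-1}$.

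The hard part will be verifying that $\tilde u$, once pushed down to $\gam G/M$, actually lies in $\tu{res}_{\nabla_X,\V_\tau}(\lambda)$ rather than merely in $\ker(\nabla_X+\lambda)$: this requires the microlocal constraint $\tu{WF}(u)\subset E_+^*$ from \eqref{eq:WF_characterization}. Because the Poisson-type extension is constant along right-$N^-$-orbits (hence smooth along the lifts of stable leaves) and depends on $g$ only through the smooth Iwasawa maps $k^-$ and $H^-$, all singularities of $\tilde u$ should be conormal to the unstable foliation, i.e., concentrated in $E_+^*$. Making this precise amounts to writing the Schwartz kernel of the extension map explicitly and applying Hörmander's pullback/pushforward calculus of wavefront sets along the smooth fibrations supplied by the Iwasawa decomposition; this is the technical heart of the argument. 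Once the wavefront bound is secured, the cocompactness of $\Gamma$ forces both spaces in \eqref{eq:ruellerepiso} to be finite-dimensional, and the mutually inverse maps $Q_\lambda$ and its extension recipe complete the proof.
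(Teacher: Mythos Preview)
The paper does not give its own proof of this proposition; it is quoted verbatim from \cite[Lemma~2.15]{kuester-weich18}. Your outline is essentially the standard argument used there (and, in the scalar case, in \cite{dfg,GHW20}): lift to $G$, reinterpret $(\nabla_X+\lambda)u=0$ and $\mathcal U_-u=0$ as right-$AN^-$-equivariance, restrict to $K$ via the opposite Iwasawa decomposition, and read off the $\Gamma$-equivariance as the principal series condition; the inverse is the Poisson-type extension, and the wavefront bound follows from smoothness of the Iwasawa maps together with right-$AN^-$-invariance. So the strategy matches the cited source.

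One bookkeeping slip: with the paper's definition \eqref{eq:covariantderiv}, the operator $\nabla_X$ is \emph{exactly} the right regular action of $H_0$, with no $\rho$-shift. Thus $(\nabla_X+\lambda)u=0$ gives $\tilde u(g\exp(tH_0))=e^{-\lambda t}\tilde u(g)$ and the extension formula reads $\tilde u(g)=e^{-\lambda H^-(g)}\bar u(k^-(g))$, not $e^{-(\lambda+n/2)H^-(g)}$. The $n/2$ lives entirely in the normalisation of the principal series \eqref{eq:compactp}: plugging $\mu=-\lambda-n/2$ there produces the factor $e^{-\lambda H^-(\gamma^{-1}k)}$, which is what matches your $\Gamma$-invariance computation. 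Your final identification $\pi^{-\lambda-n/2}_\tau(\gamma)\bar u=\bar u$ is therefore correct, but the intermediate displayed formula and the sentence about a ``Weyl shift in $\nabla_X$'' should be corrected. Also a minor terminology point: $\nL^-$ corresponds to the \emph{unstable} bundle $E_-$ in the paper's conventions, so ``smooth along the lifts of stable leaves'' should read ``unstable leaves''; the wavefront conclusion $\mathrm{WF}(u)\subset E_+^*$ is nonetheless right, since $E_+^*$ is precisely the annihilator of $E_0\oplus E_-$.
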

\subsection{Relating resonances of the Lie- and covariant derivatives}
Proposition~\ref{prop:boundary_distribution} provides a powerful way to handle first band resonant states of the covariant derivative  $\nabla_X$ along the geodesic vector field. 
In Proposition~\ref{prop:constant_mult_perp} and \ref{prop:p_forms_Betti} we are however interested in resonant states of the Lie derivative. 
Therefore we have to relate these states: 
\begin{lem}\label{lem:shift451}For $p\in\{0,1,2,\ldots\}$, suppose that $\tau$ is a subrepresentation of $\otimes^p(\mathrm{Ad}(M)|_{\nL^\pm})$. 
Then the covariant derivative and the Lie derivative along the geodesic vector field $X$, acting on smooth sections of $\V_{\tau}$, are related by
\[
\mathcal{L}_{X} = \nabla_{X} \mp p\,\mathrm{id}_{\Gamma^\infty(\V_{\tau})}.
\] 
Consequently, one has for every $\lambda \in \C$ and $p\in \N$
\begin{equation}\label{eq:reslres}
\mathrm{Res}_{\mathcal L_X,\V_\tau}(\lambda)=\mathrm{Res}_{\nabla_X, \V_\tau}(\lambda \mp p) ~~\tu{ and }~~ \mathrm{res}_{\mathcal L_X,\V_\tau}(\lambda)=\mathrm{res}_{\nabla_X, \V_\tau}(\lambda \mp p).
\end{equation}
\end{lem}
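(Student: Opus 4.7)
The plan is to prove the pointwise operator identity $\mathcal{L}_X = \nabla_X \mp p\,\mathrm{id}$ by first showing that $D := \mathcal{L}_X - \nabla_X$ is a zero-order bundle endomorphism of $\V_\tau$, then reducing to the case $p=1$ via the Leibniz rule, and finally computing $D$ on the basic bundles $E_\pm$ directly. Both $\mathcal{L}_X$ and $\nabla_X$ are first-order differential operators with the same principal symbol---each acts on a product $f\cdot s$ with $f\in\Cinft(S^*\M)$ by $X(f)s$ plus a term depending on $s$ only pointwise---so $D$ is a smooth bundle endomorphism. Both operators also obey the Leibniz rule on tensor products $\V_{\tau_1 \otimes \tau_2} = \V_{\tau_1}\otimes\V_{\tau_2}$, hence so does $D$. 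It follows that once we establish $D=\mp\mathrm{id}$ on $E_\pm = \V_{\Ad(M)|_{\nL^\pm}}$, the derivation property forces $D = \mp p\,\mathrm{id}$ on $\otimes^p E_\pm$, and restriction to any invariant subrepresentation $\tau\subset \otimes^p \Ad(M)|_{\nL^\pm}$ preserves this scalar formula. (The case $p=0$ is trivial since $\V_\tau$ is then the trivial line bundle and both operators reduce to $X$ on functions.)

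For the base case $p=1$, I would represent a section $V\in\Gamma^\infty(E_\pm)$ by an $M$-equivariant function $\bar V\colon \gam G\to\nL^\pm$, so that $\nabla_X V$ corresponds to $\frac{d}{dt}\big|_{t=0}\bar V(\gam g e^{tH_0})$ by \eqref{eq:covariantderiv}. To compute $\mathcal{L}_X V=\frac{d}{dt}\big|_{t=0}\varphi_t^* V$, I would evaluate the pullback of the vector field $V$ under the geodesic flow $\varphi_t(\gam g M)=\gam g e^{tH_0} M$ explicitly via the formula $(\varphi_t^* V)(\gam g M) = (d\varphi_{-t})_{\varphi_t(\gam g M)}V(\varphi_t(\gam g M))$. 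The key manipulation is the matrix identity $e^{sY}e^{tH_0} = e^{tH_0}e^{s\,\Ad(e^{-tH_0})Y}$ for $Y\in\g$, which together with the root relation $\Ad(e^{-tH_0})Y = e^{\mp t}Y$ for $Y\in\nL^\pm$ (the integrated form of \eqref{eq:commrelroot}) shows that the flow differential on $E_\pm$ decomposes as the exponential scalar $e^{\mp t}$ times the canonical parallel transport $[\gam g, v]\mapsto[\gam g e^{tH_0}, v]$ of the associated-bundle construction. Differentiating $\varphi_t^* V$ at $t=0$ and recognizing the derivative of the parallel-transport pullback as $\nabla_X V$ yields the base-case formula $\mathcal{L}_X V - \nabla_X V = \mp V$ on $E_\pm$.

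The resonance-space identity \eqref{eq:reslres} then follows directly from the operator equality $\mathcal{L}_X + \lambda = \nabla_X + (\lambda\mp p)$ on $\Gamma^\infty(\V_\tau)$, which extends to $\mathcal D'(S^*\M,\V_\tau)$ by duality. Consequently, $\ker(\mathcal{L}_X+\lambda)^J = \ker(\nabla_X + (\lambda\mp p))^J$ for every $J\in\N$, and since the wavefront-set condition $\tu{WF}(u)\subset E_+^*$ in the characterization \eqref{eq:WF_characterization} is a property of the distribution $u$ alone and independent of which differential operator one applies, we obtain $\Res_{\mathcal{L}_X,\V_\tau}(\lambda) = \Res_{\nabla_X,\V_\tau}(\lambda\mp p)$; the statement for $\tu{res}$ is the $J=1$ case.

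The main technical hurdle is consistent sign bookkeeping: the direction of the geodesic flow, the convention for the pullback of vector fields, the eigenvalues of $\Ad(e^{-tH_0})$ on $\nL^\pm$, and the distinction between the flow differential and the canonical parallel transport of the reductive homogeneous structure each contribute potentially error-prone signs. Once these conventions are fixed, the $p=1$ computation is a short derivative calculation and the extension to tensor powers follows formally from the derivation property of $D$.
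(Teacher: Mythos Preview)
Your overall strategy---showing that $D=\mathcal L_X-\nabla_X$ is a zero-order derivation, reducing to $p=1$, and extending by the Leibniz rule---is sound and essentially parallel to the paper's direct computation (the paper simply handles general $p$ in one line by picking up the factor $e^{\mp pt}$ from $p$ simultaneous contractions). Your deduction of \eqref{eq:reslres} from the operator identity is also correct.

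However, your base-case computation contains precisely the sign slip you warn about. You correctly write $(\varphi_t^* V)(\gam gM)=(d\varphi_{-t})_{\varphi_t(\gam gM)}V(\varphi_t(\gam gM))$ for a \emph{vector field} $V\in\Gamma^\infty(E_\pm)$, and you correctly observe that $d\varphi_t$ acts on $E_\pm$ as $e^{\mp t}$ times parallel transport. But the pullback of a vector field involves $d\varphi_{-t}$, not $d\varphi_t$, so the relevant scalar is $e^{\pm t}$; differentiating at $t=0$ gives $\mathcal L_X V=\nabla_X V\pm V$ on $E_\pm$, the opposite sign to what you claim. Tensoring up then yields $\mathcal L_X=\nabla_X\pm p$ on $\otimes^p E_\pm$, contradicting the lemma.

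The paper avoids this by computing with \emph{covariant} tensors: it evaluates $\bar\omega(g)$ on vectors $v_1,\ldots,v_p\in\nL^\pm$, so $\omega$ is treated as a $p$-form on $E_\pm$, and the pullback of covariant tensors uses $d\varphi_t$ directly, producing $e^{\mp pt}$ and hence $\mathcal L_X=\nabla_X\mp p$. (All the applications of the lemma are to bundles such as $\Lambda^p E_\pm^*$, consistent with this reading.) To repair your argument, either run the base case on $E_\pm^*$---where the pullback genuinely uses $d\varphi_t$ and yields $\mp$---or carry out the vector-field computation correctly to obtain $\pm$ and then insert an explicit dualization step before extending to tensor powers.
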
 
\begin{proof}
Recall that the geodesic flow on $S^\ast(\gam G/K)=\gam G/M$ is given by
\begin{equation}
\varphi_t(\Gamma gM)=\Gamma ge^{tH_0}M,\qquad t\in \R.\label{eq:flow35902}
\end{equation}
Its derivative $d\varphi_t:T(\gam G/M)=\gam G\times _{\mathrm{Ad}(M)}(\nL^+\oplus \aL\oplus \nL^-)\to \gam G\times _{\mathrm{Ad}(M)}(\nL^+\oplus \aL\oplus \nL^-)$ reads
\begin{equation}
d\varphi_t(\Gamma gM)([\Gamma g M, v])=[\Gamma g M, \mathrm{Ad}(e^{-tH_0})v],\qquad t\in \R,\;[\Gamma g M, v]\in \gam G\times _{\mathrm{Ad}(M)}(\nL^+\oplus \aL\oplus \nL^-).\label{eq:flowderiv}
\end{equation}
Any vector $v\in \nL^\pm$ is an eigenvector of the adjoint action:
\begin{equation}
\mathrm{Ad}(e^{-tH_0})v =e^{-t\mathrm{ad}(H_0)}v =e^{\mp t} v.\label{eq:adactions}
\end{equation}
Let now $\omega\in \Gamma^\infty(\V_\tau)$, identified with a left-$\Gamma$-, right-$M$-equivariant function $\overline \omega: G\to V$, where $V\subset\otimes^p (\nL^\pm)$.
Considering $\varphi_t$ as a left-$\Gamma$-, right-$M$-equivariant map $\bar \varphi_t: G\to G$, let $\overline{\varphi_t^\ast \omega}: G\to V$ be the left-$\Gamma$-, right-$M$-equivariant function corresponding to $\varphi_t^\ast \omega\in \Gamma^\infty(\V_\tau)$. 
Then we get with \eqref{eq:adactions} for  $g\in G$ and $v_1,\ldots,v_p\in \nL^\pm$:
\[
\overline{\varphi_t^\ast \omega}(g)(v_1,\ldots,v_p)
=\bar \omega(ge^{tH_0})(e^{\mp t }v_1,\ldots,e^{\mp t }v_p)=e^{\mp p t}\bar \omega(ge^{tH_0})(v_1,\ldots,v_p).
\]
For the Lie derivative of $\omega$ we then obtain with the analogous ``$\,\bar{\;}\bar{\;}\,$''-notation and the product rule
\begin{align*}
\overline{\mathcal{L}_{X}\omega}(g)(v_1,\ldots,v_p)&=\frac{d}{dt}\Big|_{t=0}\overline{\varphi_t^\ast \omega}(g)(v_1,\ldots,v_p)\\
&=\frac{d}{dt}\Big|_{t=0}\Big(e^{\mp p t }\bar \omega(ge^{tH_0})(v_1,\ldots,v_p)\Big)\\
&=\frac{d}{dt}\Big|_{t=0}\bar \omega(ge^{tH_0})(v_1,\ldots,v_p) \mp p \bar\omega(g)(v_1,\ldots,v_p) \\
&=\overline{\nabla_{X}\omega}(g)(v_1,\ldots,v_p) \mp p \bar\omega(g)(v_1,\ldots,v_p).
\end{align*}
Here we recalled the definition \eqref{eq:covariantderiv} of the canonical covariant derivative.
\end{proof}
\subsection{Proof of Proposition~\ref{prop:p_forms_Betti}}
Let us collect what we have obtained so far: By Lemma~\ref{lem:shift451} 
\[
\mathrm{res}^{\mathrm{1st}}_{\mathcal L_X,\Lambda^p E^\ast_+}(0) = \mathrm{res}^{\mathrm{1st}}_{\nabla_X,\Lambda^p E^\ast_+}(-p)
~~\tu{ and }~~
\mathrm{res}^{\mathrm{1st}}_{\mathcal L_X,\Lambda^p E^\ast_-}(-2p) = \mathrm{res}^{\mathrm{1st}}_{\nabla_X,\Lambda^p E^\ast_-}(-p).
\]
As the adjoint action of $M$ on $\nL^\pm$ is given by the defining representation of 
$\SO(n)$ on $\R^n$ we deduce from \eqref{eq:splitting_associated} that  
$\Lambda^p(E^*_\pm) = \Gamma\backslash G\times_{\tau_p} \Lambda^p(\R^n)$ with $\tau_p$
being the p-th exterior power of the standard action of $\SO(n)$ on $\R^n$. 
By Proposition~\ref{prop:boundary_distribution} we can thus identify
\[
 \mathrm{res}^{\mathrm{1st}}_{\mathcal L_X,\Lambda^p E^\ast_+}(0)
 \cong 
 \mathrm{res}^{\mathrm{1st}}_{\mathcal L_X,\Lambda^p E^\ast_-}(-2p) 
 \cong 
 {^\Gamma}\big(\D'(K/M,\V^{\mathcal{B}}_{\tau_p}), \pi^{p-n/2}_{\tau_p}\big).
\]
We now use a vector-valued Poisson transform. To this end, let  
$\Delta_H = d\delta + \delta d$ be the Hodge Laplacian on $\Omega^p(\mathbb H^{n+1})$. 
\begin{thm}[Poisson transform for $\Gamma$-invariant $p$-forms]\label{thm:gaillard}
 Let $K=\SO(n+1)$, $M=\SO(n)$, and let $\tau_p$ be the $p$-th exterior power of the defining representation of $\SO(n)$ on $\R^n$. Then for any $\lambda\in\C$ with $\lambda\neq n-p$ and $\lambda\neq n+1, n+2,\ldots$, there is an isomorphism of vector spaces
 \bqn
P_{\tau_p,\lambda}: {^\Gamma}\big(\D'(K/M,\V^{\mathcal{B}}_{\tau_p}), \pi^{\lambda-n/2}_{\tau_p}\big)  \to\big\{\omega \in \Omega^p(\M): \Delta_H \omega = (\lambda-p)(n-\lambda - p)\omega,~\delta \omega =0\big\}.
 \eqn
\end{thm}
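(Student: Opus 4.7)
My plan is to realize $P_{\tau_p,\lambda}$ as a concrete $G$-equivariant Poisson transform built from an explicit vector-valued kernel, and then to establish bijectivity through the theory of boundary asymptotics of eigen-$p$-forms on $\mathbb{H}^{n+1}=G/K$. The overall strategy mirrors the scalar Helgason--Kashiwara theorem for functions, but has to incorporate an intertwining piece that sends the $M$-representation $\tau_p=\Lambda^p(\R^n)$ on the boundary to the $K$-representation $\Lambda^p\p^*$ on tangent spaces of $\mathbb{H}^{n+1}$. Concretely, I would first construct a $V$-valued kernel $P^{\tau_p,\lambda}$ on $G/K\times K/M$ whose scalar factor is of the form $e^{(\lambda-n/2+p)H^-(g^{-1}k)}$ -- fixed by the requirement that its image be a Casimir, hence Hodge Laplacian, eigen-form with eigenvalue $(\lambda-p)(n-\lambda-p)$ -- and whose matrix-valued factor is a $G$-equivariant fibrewise embedding built from the Iwasawa piece $k^-(g^{-1}k)$. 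Pairing a distribution $u\in\D'(K/M,\V^{\B}_{\tau_p})$ against this kernel defines $P_{\tau_p,\lambda}(u)$ as a distributional $p$-form on $\mathbb{H}^{n+1}$, which is automatically smooth by elliptic regularity once the eigenvalue equation is verified.

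Second, I would check the two differential constraints on the image. The Hodge Laplacian eigenvalue $(\lambda-p)(n-\lambda-p)$ follows by a direct Casimir computation on sections of $G\times_K \Lambda^p\p^*$. Coclosedness $\delta P_{\tau_p,\lambda}(u)=0$ is more delicate: it is extracted from a representation-theoretic argument saying that $\delta$ corresponds, under the Poisson transform, to a $G$-intertwiner into principal series attached to a smaller $K$-type, and a careful choice of the embedding $\tau_p\hookrightarrow\Lambda^p\p^*$ (adapted to $\delta$ via Schur's lemma) ensures the image lies in the kernel of $\delta$. The $G$-equivariance of the kernel, where the source carries $\pi^{\lambda-n/2}_{\tau_p}$ and the target carries pull-back of $p$-forms, is built in by construction, so $\Gamma$-invariance is preserved and the target descends to $\M=\gam\mathbb{H}^{n+1}$.

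The main obstacle is bijectivity, which is the content of the vector-valued analogue of Helgason's conjecture for $p$-forms. I would attack this via an asymptotic expansion near the boundary: any coclosed eigen-$p$-form $\omega$ on $\mathbb{H}^{n+1}$ with the prescribed eigenvalue admits, at the parameters allowed by the hypotheses, a unique boundary expansion with two leading terms whose indicial orders are determined by $\lambda-p$ and $n-\lambda-p$. The assumption $\lambda\neq n-p$ makes these two indicial exponents distinct, so that the leading coefficient is a well-defined distributional section of $\V^{\B}_{\tau_p}$ which can be read off from $\omega$; the assumption $\lambda\notin\{n+1,n+2,\ldots\}$ excludes the resonant values at which logarithmic obstructions appear in the formal Frobenius-type expansion. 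Injectivity of $P_{\tau_p,\lambda}$ is then immediate from the non-vanishing of the leading asymptotic coefficient. Surjectivity follows by showing that, given $\omega$ with leading datum $u$, the difference $\omega - P_{\tau_p,\lambda}(u)$ is a coclosed eigen-form with vanishing leading asymptotics, hence identically zero by a uniqueness argument for the indicial system. The resulting $G$-equivariant bijection restricts to the claimed isomorphism on $\Gamma$-invariants, matching $\Gamma$-invariant boundary distributions with coclosed $\Delta_H$-eigen $p$-forms on the closed quotient $\M$.
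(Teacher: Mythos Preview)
Your proposal sketches a from-scratch proof of the vector-valued Poisson isomorphism in the spirit of the Helgason--Kashiwara theory, whereas the paper does not prove Theorem~\ref{thm:gaillard} at all in this sense: it attributes the result to Gaillard \cite[Thm.~2' and 3']{gaillard1} and devotes Section~\ref{subsec:gaillard} exclusively to translating Gaillard's formulation (pullback $G$-action on $D^s$-valued $p$-currents on $K/M$) into the principal-series language $\pi^{\lambda-n/2}_{\tau_p}$ used here. The paper's actual work is Lemma~\ref{lem:compat}, computing $d\alpha_g$ on $T(K/M)\cong K\times_{\Ad(M)}\m^{\perp_\k}$ via the Iwasawa projection to identify the pullback action with $\pi^{p-n/2}_{\tau_p}$, and then matching Gaillard's line-bundle twist $D^s$ with $\pi^s_{\mathbb 1}$, so that $D^s$-valued currents carry $\pi^{p+s-n/2}_{\tau_p}$.

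So your approach is not wrong but is vastly more ambitious than what the paper does, and several of your steps are genuinely hard results you only gesture at. The coclosedness of the image is not a Schur-lemma triviality: Gaillard's construction works because the embedding $\tau_p\hookrightarrow \Lambda^p\p^*|_M$ has two irreducible pieces (tangential and normal to the boundary) and one must choose the correct linear combination; your ``careful choice'' hides this. The bijectivity via boundary asymptotics is the core of Gaillard's paper and requires controlling the full asymptotic expansion, not just the two leading indicial roots; in particular the exclusion $\lambda\notin\{n+1,n+2,\ldots\}$ reflects poles of intertwining operators rather than merely ``logarithmic obstructions'' in a Frobenius series, and making this precise is substantial. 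If the intent is to match the paper, you should instead cite Gaillard and carry out the representation-theoretic dictionary; if the intent is an independent proof, be aware that each of your paragraphs is itself a nontrivial theorem.
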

This result is due to Gaillard (see \cite[Thm.\ 2' c) and Thm.\ 3']{gaillard1}, taking into account that $\Gamma$-invariant smooth forms are trivially \emph{slowly growing} in Gaillard's sense because $\Gamma$ is co-compact) although it requires some work (see Section~\ref{subsec:gaillard}) to translate his statements into the form stated above that we can apply in our setting. For $p\neq n/2$ the Poisson transform $P_{\tau_p,p}$ 
is bijective and thus
\[
 {^\Gamma}\big(\D'(K/M,\V^{\mathcal{B}}_{\tau_p}), \pi^{p-n/2}_{\tau_p}\big) \cong\left\{\omega\in\Omega^p(\M), \Delta_H\omega=0,\delta \omega=0\right\}.
\]
As on compact manifolds any harmonic form is co-closed, the right hand side is simply the kernel of the Hodge Laplacian and Hodge theory implies that its dimension equals the $p$-th Betti number of $\M$. We thus have shown
\[
\dim \mathrm{res}^{\mathrm{1st}}_{\nabla_X,\Lambda^p E^\ast_+}(-p) = \dim \mathrm{res}^{\mathrm{1st}}_{\mathcal L_X,\Lambda^p E^\ast_-}(-2p) = 
b_p(\mathcal M).
\]
Now using once more that $p\neq n/2$ \cite[Theorem 6.2]{kuester-weich18} implies that the resonance at $-p$ of $\nabla_X$ has no Jordan block and consequently
\begin{equation}\label{eq:dimensions_first_band}
  \dim \mathrm{Res}^{\mathrm{1st}}_{\mathcal L_X,\Lambda^p E^\ast_+}(0) =\dim \mathrm{Res}^{\mathrm{1st}}_{\mathcal L_X,\Lambda^p E^\ast_-}(-2p)= \dim \mathrm{Res}^{\mathrm{1st}}_{\nabla_X,\Lambda^p E^\ast_+}(-p)=\dim \mathrm{res}^{\mathrm{1st}}_{\nabla_X,\Lambda^p E^\ast_+}(-p) =b_p(\M).
\end{equation}
This finishes the proof of Proposition~\ref{prop:p_forms_Betti}.

\subsection{Proof of Proposition~\ref{prop:constant_mult_perp}}
Let $\lambda\in \C$. By the decomposition \eqref{eq:e_perp_splitting} and Lemma \ref{lem:shift451}, we have
\[
\Res_{\mathcal L_X,X^\perp}(\lambda)\cong \mathrm{Res}_{\mathcal L_X,E^\ast_+}(\lambda)\oplus \mathrm{Res}_{\mathcal L_X,E^\ast_-}(\lambda)=\mathrm{Res}_{\nabla_X, E^\ast_+}(\lambda-1)\oplus \mathrm{Res}_{\nabla_X,E^\ast_-}(\lambda+1).
\]
As $\nabla_X$ is an antisymmetric operator in $\L^2(E^\ast_-)$ there are no resonances of $\nabla_X$ on $E_-$ 
with positive real part\footnote{\label{foot:antisymm}Since the geodesic flow preserves the Liouville measure on $S^\ast \M$ and the norm on the bundle $E_-$, one can show that $\nabla_X$ is antisymmetric in $\L^2$ and one can write down an explicit formula for the $\L^2$-resolvent $(\nabla_X+\lambda)^{-1}$ when $\Re \lambda>0$, see e.g.\ \cite[(1.10)]{DG16}.}, so if $\Re \lambda >-1$ one has
\bq
  \Res_{\mathcal L_X,X^\perp}(\lambda)\cong \mathrm{Res}_{\nabla_X, E^\ast_+}(\lambda-1).\label{eq:resE+}
\eq
By the definition of first band resonant states \eqref{eq:first_band_states} 
and the dimension formula for linear maps we conclude
\begin{equation}
\dim\Res_{\nabla_X, E^\ast_+}(\lambda-1) = \dim\Res^{\mathrm{1st}}_{\nabla_X, E^\ast_+}(\lambda-1) + 
\dim\mathcal{U}_-\big(\Res_{\nabla_X, E^\ast_+}(\lambda-1)\big).\label{eq:3859210890}
\end{equation}
Regarding the statement on the leading resonance, we note that if $n\geq 3$ and $\Re \lambda >-1$, then by  Proposition \ref{prop:boundary_distribution} and  Theorem \ref{thm:gaillard} there is an isomorphism
\bq
\mathrm{res}^{\mathrm{1st}}_{\nabla_X,E^\ast_+}(\lambda-1) \cong \{\omega\in\Gamma^\infty(T^\ast\M): \Delta_H\omega=-\lambda(n+\lambda - 2)\omega, ~\delta \omega =0\},\label{eq:298090}
\eq
where $\Delta_H$ is the Hodge Laplacian on $\M$. 
 When $\Re \lambda > 1-\frac{n}{2}$, the  eigenvalue $-\lambda(n+\lambda - 2)$ is real and positive iff $\lambda\in(1-\frac{n}{2},0]$ and if this does not hold the right hand side of \eqref{eq:298090} is the zero space. It follows for  $n\geq 3$ and $\Re \lambda > 1-\frac{n}{2}$ that $\Res^{\mathrm{1st}}_{\nabla_X, E^\ast_+}(\lambda-1)=\{0\}$ unless $\lambda\in (1-\frac{n}{2},0]$ because every Jordan block would contain at least one resonant state. 
Now, in view of Proposition~\ref{prop:p_forms_Betti},   \eqref{eq:dimensions_first_band}, and \eqref{eq:3859210890},  
it remains to prove $\mathcal{U}_-(\Res_{\nabla_X, E^\ast_+}(\lambda-1)) = 0$ under the assumption that $n \neq 2$ and  $\Re \lambda > -\delta$ for some small $\delta>0$ to establish Proposition~\ref{prop:constant_mult_perp}. Recall from \eqref{eq:horocycle_op} that  
$\mathcal{U}_-(\Res_{\nabla_X, E^\ast_+}(\lambda-1)) \subset\mathcal D'(\mathcal M, E^*_+\otimes E^ *_-)$. Further, by \eqref{eq:Ushift} one has 
\[
 \mathcal{U}_-\big(\Res_{\nabla_X, E^\ast_+}(\lambda-1)\big) \subset \Res_{\nabla_X, E^\ast_+\otimes E^ \ast_-}(\lambda).
\]
If $\Re \lambda >0$, we immediately get the zero space on the right hand side as otherwise there would be resonances of $\nabla_X$ with positive real part, which is impossible by the antisymmetry of $\nabla_X$ in $\L^2(E^\ast_+\otimes E^ \ast_-)$, cf.\ Footnote \ref{foot:antisymm}.  
We are left with the proof of $\mathcal{U}_-(\Res_{\nabla_X, E^\ast_+}(\lambda-1)) = 0$ for $\Re \lambda \in (-\delta,0]$ with some small $\delta>0$. 
Another application of \eqref{eq:Ushift} and the absence of resonances of $\nabla_X$ with positive real part due to  antisymmetry  implies $$\Res_{\nabla_X, E^\ast_+\otimes E^ \ast_-}(\lambda) = \Res^ {\mathrm{1st}}_{\nabla_X, E^\ast_+\otimes E^ \ast_-}(\lambda)\qquad\text{if }\Re \lambda> -1.$$
Using the quantum-classical correspondence once more we shall obtain a simple description of the latter spaces. To this end, note that
the Cartan involution $\theta|_{\nL^+}:\nL^+\to \nL^-$ is an equivalence of representations $\Ad(M)|_{\nL^+}\sim \Ad(M)|_{\nL^-}$ which induces an isomorphism $E^\ast_+\cong E^\ast_-$ that is compatible with the connections on the two bundles. This in turn induces a connection-compatible isomorphism $E^\ast_+\otimes  E^\ast_-\cong E^\ast_-\otimes  E^\ast_-$.  As the covariant derivatives $\nabla_X$ as well as the horocycle operators $\mathcal U_-$ are defined in terms of the respective connections, we conclude 
\[
\Res^{\mathrm{1st}}_{\nabla_X, E^\ast_+\otimes  E^\ast_-}(\lambda)\cong\Res^{\mathrm{1st}}_{\nabla_X, E^\ast_-\otimes  E^\ast_-}(\lambda).
\]
Now let $\tilde{\mathscr g}$ be the Riemannian metric on $S^\ast\mathcal M$ induced by the Sasaki metric on $T^\ast \M$ with respect to the Riemannian metric on $\M$. The  restriction of $\tilde{\mathscr g}$ to $E_-\times E_-$ defines a smooth section of $E^\ast_-\otimes  E^\ast_-$. 

If $n=2$, then $\Lambda^2E^\ast_-\subset E^\ast_-\otimes  E^\ast_-$ is the top-degree exterior power of $E_-$ and hence trivialized by  choosing an orientation form $ \Omega_{E_-}$ on $E_-$. Choosing a non-zero element $\Omega_0\in \Lambda^2\nL^\ast$, we can define $ \Omega_{E_-}$ to be the smooth section of $\Lambda^2E^\ast_-=\gam G\times_{\Lambda^2 \Ad^\ast(M)}(\Lambda^2\nL^\ast)$ induced by the constant function $G\to \Lambda^2\nL^\ast$ with the value $\Omega_0$.

\begin{lem}\label{lem:sym_tensor_resonances}There is a number $\delta>0$ such that for all $\lambda \in \C$ with $\Re \lambda \in (-\delta,0]$ one has
 \[
\Res^ {\mathrm{1st}}_{\nabla_X, E^\ast_-\otimes E^ \ast_-}(\lambda)  =\begin{cases}\{c\, \tilde{\mathscr g}|_{E_-\times E_-}: c:S^\ast\mathcal M\to\C \tu{ locally constant}\},\qquad & \lambda=0,n\neq 2\\
\{c\, \tilde{\mathscr g}|_{E_-\times E_-} + \tilde  c\, \Omega_{E_-}: c,\tilde c:S^\ast\mathcal M\to\C \tu{ locally constant}\},\qquad & \lambda=0,n=2\\
\{0\}, &\text{else}.\end{cases}
 \]
\end{lem}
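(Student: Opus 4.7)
Plan of proof. We decompose
\[
E^\ast_-\otimes E^\ast_- \;=\; \mathrm{S}^2_0 E^\ast_- \,\oplus\, \C\cdot\tilde{\mathscr g}|_{E_-\times E_-} \,\oplus\, \Lambda^2 E^\ast_-
\]
into $\Ad(M)$-irreducible subbundles, mirroring the standard $\SO(n)$-decomposition of $\R^n\otimes\R^n$ into trace-free symmetric, trace, and antisymmetric parts. Since $\nabla_X$ and $\mathcal U_-$ respect $\Ad(M)$-invariant subbundles, the first-band resonance spaces split accordingly, and by Proposition \ref{prop:boundary_distribution} it suffices to estimate for each summand $\V_\tau$ the space of boundary distributions ${}^\Gamma\!\bigl(\D'(K/M, \V^\B_\tau), \pi^{-\lambda - n/2}_\tau\bigr)$ via Poisson transforms.

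Both the trace line (for every $n$) and the antisymmetric line $\Lambda^2 E^\ast_-$ in the exceptional dimension $n=2$ carry the \emph{trivial} $\Ad(M)$-representation (in the latter case because $M=\SO(2)$ preserves orientation), so Theorem \ref{thm:gaillard} applies with $p=0$ and parameter $\lambda'=-\lambda$, which is valid near $\lambda=0$ since $-\lambda\neq n$. This yields a bijection onto $\{f\in\Cinft(\M):\Delta f = -\lambda(n+\lambda)f\}$; at $\lambda=0$ this is the $b_0(\M)$-dimensional space of locally constant functions, while the spectral gap of $\Delta$ makes it trivial on a punctured neighbourhood of $0$. In each of these two line bundles the corresponding resonant states are automatically of the form $c\cdot(\text{parallel trivialising section})$, producing the $c\cdot\tilde{\mathscr g}|_{E_-\times E_-}$ and (for $n=2$) $\tilde c\cdot\Omega_{E_-}$ contributions with $c,\tilde c$ locally constant on $S^\ast\M$. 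For the antisymmetric summand $\Lambda^2 E^\ast_-$ when $n\neq 2$, Theorem \ref{thm:gaillard} instead applies with $p=2$ and $\lambda'=-\lambda$ (valid near $\lambda=0$ since $-\lambda\neq n-2$), yielding $\{\omega\in\Omega^2(\M):\Delta_H\omega = (-\lambda-2)(n+\lambda-2)\omega,\;\delta\omega=0\}$; at $\lambda=0$ the eigenvalue equals $-2(n-2)<0$ for $n\geq 3$, hence this space is trivial. A uniform $\delta>0$ then makes all these eigenspaces vanish throughout $\Re\lambda\in(-\delta,0)$ by discreteness of the Laplacian spectrum and continuity in $\lambda$, and non-real $\lambda$ in the strip is automatically ruled out since both $\lambda$-polynomials are non-real there while $\Delta$ and $\Delta_H$ have real spectra.

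The remaining trace-free symmetric summand $\mathrm{S}^2_0 E^\ast_-$ (present for $n\geq 2$) is not covered by Theorem \ref{thm:gaillard}; here I invoke instead the Poisson transform for the irreducible $\SO(n)$-representation on trace-free symmetric 2-tensors developed in \cite{dfg}, which bijects first-band resonant states at $\lambda$ with an eigenspace of a concrete natural Laplacian on trace-free symmetric 2-tensors on $\M$ at a specific $\lambda$-dependent eigenvalue. The same spectral-gap argument then produces a $\delta>0$ making this eigenspace vanish throughout $\Re\lambda\in(-\delta,0]$. Combining the three summands yields the asserted description of the non-generalised first-band resonant spaces $\mathrm{res}^{\mathrm{1st}}_{\nabla_X, E^\ast_-\otimes E^\ast_-}(\lambda)$ on the strip $\Re\lambda\in (-\delta,0]$, and the upgrade to the generalised spaces $\Res^{\mathrm{1st}}$ follows from \cite[Theorem 6.2]{kuester-weich18}, which excludes Jordan blocks at first-band resonances. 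The main technical difficulty lies in the trace-free symmetric piece: one needs to import the symmetric-tensor Poisson transform from \cite{dfg} and verify that its $\lambda$-dependent target eigenvalue stays clear of the discrete Laplacian spectrum in a neighbourhood of $\lambda=0$.
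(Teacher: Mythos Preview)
Your approach is essentially the paper's: the same $\Ad(M)$-splitting into $S^2_0 E^\ast_-\oplus \Lambda^2 E^\ast_-\oplus \C\tilde{\mathscr g}|_{E_-\times E_-}$, Gaillard's transform (Theorem~\ref{thm:gaillard}) for the $\Lambda^2$-piece when $n\neq 2$, and the symmetric-tensor machinery of \cite{dfg} for $S^2_0$. Two points are worth noting.

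\emph{Trivial line bundles.} For the trace line (and $\Lambda^2 E^\ast_-$ when $n=2$) the paper does not go through the boundary and the $p=0$ Poisson transform. Instead it observes directly that $\nabla_X^J(c\,\tilde{\mathscr g}|_{E_-\times E_-})=(X^Jc)\,\tilde{\mathscr g}|_{E_-\times E_-}$ (and likewise for $\Omega_{E_-}$), so the problem reduces to \emph{scalar} generalized Pollicott--Ruelle resonances on $S^\ast\M$, and then invokes Liverani's spectral gap for contact Anosov flows \cite{Liv04} to get both the identification at $\lambda=0$ and the $\delta>0$. Your route via Theorem~\ref{thm:gaillard} with $p=0$ also works, but note that it only computes $\mathrm{res}^{\mathrm{1st}}$; your appeal to \cite[Theorem~6.2]{kuester-weich18} for the Jordan-block upgrade is fine for the trivial $M$-representation, while the paper gets the absence of Jordan blocks for free from Liverani's scalar result.

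\emph{The $S^2_0$ piece.} You correctly identify this as the crux and point to \cite{dfg}, but the actual input you need is specific: by \cite[Lemmas~4.7, 5.6, Thm.~6]{dfg} one has $\mathrm{res}^{\mathrm{1st}}_{\nabla_X,S^2_0(E^\ast_-)}(\lambda)\cong\{\omega\in\Gamma^\infty(S^2_0(T^\ast\M)):\Delta_B\omega=(-\lambda(n+\lambda)+2)\omega,\ \mathrm{div}\,\omega=0\}$, and the Weitzenb\"ock-type bound \cite[Lemma~6.1]{dfg} gives $\mathrm{spec}(\Delta_B|_{S^2_0})\subset[n+1,\infty)$. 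Since $-\lambda(n+\lambda)+2<n+1$ for $n\geq 2$ and $\lambda\in(-1/2,\infty)$, the eigenspace is already zero at $\lambda=0$ and throughout a half-plane --- this is not a ``spectral gap near zero'' argument but an a priori lower bound on the whole spectrum. Once $\mathrm{res}^{\mathrm{1st}}=0$ on this summand, $\Res^{\mathrm{1st}}=0$ follows immediately because any Jordan block contains a resonant state; no separate no-Jordan-block theorem is needed here.
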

Before proving this lemma let us see how it finishes the proof of Proposition~\ref{prop:constant_mult_perp} and \eqref{eq:all1stband}: All that is left to prove is that if $\mathcal U_- s=c \eta$ with $\eta\in\{\tilde{\mathscr g}|_{E_-\times E_-},\Omega_{E_-}\}$, $s\in \Res_{\nabla_X, E^\ast_+}(-1)$, and $c\in \C$, then $c=0$. This is easy:\footnote{We thank Colin Guillarmou for suggesting the slick argument.} If $\mathcal U_- s=c \eta$, then
\[
\big<\mathcal U_- s,\eta\big>_{\L^2(S^\ast\M,S^2(E^\ast_-))}=c \Vert\eta\Vert_{\L^2(S^\ast\M,S^2(E^\ast_-))}^2.
\]
Thus, if $\mathcal U^\ast_-$ is the formal adjoint of $\mathcal U_-$, we have
\begin{equation}
s(\mathcal U^\ast_-(\eta))=c \Vert\eta\Vert_{\L^2(S^\ast\M,S^2(E^\ast_-))}^2,\label{eq:final}
\end{equation}
where the left hand side is the pairing of the distributional section $s$ with the smooth section $\mathcal U^\ast_-(\eta)$. In \cite[Lemma 4.3]{dfg} it is shown that $\mathcal U^\ast_-=-\mathcal T\circ \mathcal U_-$, $\mathcal T$ being the trace operator. The smooth  section $\eta$ vanishes under all covariant derivatives as it corresponds to the constant function $G\to \nL^\ast\otimes\nL^\ast$ with either the value $\eklm{\cdot,\cdot}|_{\nL\times\nL}$ or the value $\Omega_0$. Therefore, we find $\mathcal U^\ast_-(\eta)=0$ and \eqref{eq:final} implies $c=0$.

It remains to prove Lemma~\ref{lem:sym_tensor_resonances}:
\begin{proof}[Proof of Lemma~\ref{lem:sym_tensor_resonances}]
The tensor product $E^\ast_-\otimes  E^\ast_-$ splits into a sum of three subbundles according to
\[
E^\ast_-\otimes  E^\ast_-=S^2_0(E^\ast_-)\oplus \Lambda^2E^\ast_-\oplus \C \tilde{\mathscr g}|_{E_-\times E_-},
\]
where $S^2_0(E^\ast_-)$ denotes the trace-free symmetric tensors of rank $2$. Note that $\C \tilde{\mathscr g}|_{E_-\times E_-}$ is a trivial line bundle and for $n=1$ the other two bundles have rank zero. By the additivity of resonance multiplicities with respect to Whitney sums of vector bundles, we arrive at
\begin{equation}
\Res^{\mathrm{1st}}_{\nabla_X, E^\ast_-\otimes  E^\ast_-}(\lambda)\cong\Res^{\mathrm{1st}}_{\nabla_X,S^2_0(E^\ast_-)}(\lambda)\oplus \Res^{\mathrm{1st}}_{\nabla_X,\Lambda^2E^\ast_-}(\lambda)\oplus \Res^{\mathrm{1st}}_{\nabla_X,\C \tilde{\mathscr g}|_{E_-\times E_-}}(\lambda).\label{eq:3693sf9sgs0}
\end{equation}
Now we can consider the three summands on the right hand side individually.  According to \cite[Lemmas 4.7 and 5.6, Thm.\ 6]{dfg}, there is for $\Re\lambda >-1$ an isomorphism
\begin{equation}\label{eq:23989089098105}
\tu{res}^{\mathrm{1st}}_{\nabla_X,S^2_0(E^\ast_-)}(\lambda)\cong 
\{\omega\in \Gamma^\infty(S^2_0(T^\ast \M)):\Delta_B\omega=-\lambda(n+\lambda)+2, ~\mathrm{div}\,\omega=0\},
\end{equation}
where $\Delta_B$ is the Bochner Laplacian associated to the connection $\nabla$. The eigenvalue $-\lambda(n+\lambda)+2$ appearing here is a real number iff $\Im \lambda=0$ or $\Re \lambda=-\frac{n}{2}$, so for $\Re\lambda >-\frac{1}{2}$  only numbers $\lambda\in(-1/2,\infty)$ remain as  possible candidates for a non-zero resonance space \eqref{eq:23989089098105}. 
In addition, a Weitzenböck type formula (see \cite[Lemma 6.1]{dfg}) says that the spectrum of $\Delta_B$ acting on $\Gamma^\infty(S^2_0(T^\ast \M))$ is bounded from below by $n+1$ 
 which is strictly larger than $-\lambda(n+\lambda)+2$ for $n\geq 2$ and $\lambda\in(-1/2,\infty)$. 
Consequently, for such $n$ and $\lambda$ the right hand side of \eqref{eq:23989089098105} is the zero space and it follows that $\Res^{\mathrm{1st}}_{\nabla_X,S^2_0(E^\ast_-)}(\lambda)=\{0\}$ because every Jordan block would contain at least one resonant state.
Turning to the second summand in \eqref{eq:3693sf9sgs0}, 
we apply once more Proposition~\ref{prop:boundary_distribution}
and Theorem \ref{thm:gaillard} and obtain for 
$n\neq 2$ an isomorphism 
\bq
\mathrm{res}^{\mathrm{1st}}_{\nabla_X,\Lambda^2E^\ast_-}(\lambda)\cong\{\omega \in \Gamma^\infty(\Lambda^2(T^\ast \M)):\Delta_H\omega=-(\lambda+2)(n+\lambda-2), ~\delta \omega=0\}.\label{eq:1stLambda}
\eq
For $\Re\lambda>-1$ and $n\geq 3$, the eigenvalue appearing here is either imaginary or negative, so the right hand side of \eqref{eq:1stLambda} is the zero space (because $\Delta_H$ is positive) and $\mathrm{res}^{\mathrm{1st}}_{\nabla_X,\Lambda^2E^\ast_-}(\lambda)=\{0\}$, $\Res^{\mathrm{1st}}_{\nabla_X,\Lambda^2E^\ast_-}(\lambda)=0$. 

When $n=2$ we have $\Lambda^2E^\ast_-=\R\Omega_{E_-}$. We can thus treat the second summand in \eqref{eq:3693sf9sgs0}  for $n=2$ and the third summand in \eqref{eq:3693sf9sgs0} for arbitrary $n$ in the same way: 
As $\nabla_X^J (\tilde c\Omega_{E_-})=(X^J\tilde c)\,\Omega_{E_-}$ and $\nabla_X^J (c\,\tilde{\mathscr g}|_{E_-\times E_-})=(X^Jc)\,{\mathscr g}|_{E_-\times E_-}$ for each $J\in \N$, we see that the distributions $c,\tilde c$ have to be generalized scalar resonant states of a resonance $\lambda$. In the scalar case we can however apply Liverani's result on the spectral gap for contact Anosov flows \cite{Liv04} to see that zero is the unique leading resonance, with (generalized) resonant states the locally constant functions, and there is a spectral gap $\delta>0$,  so the proof is finished.
\end{proof}

\subsection{Gaillard's Poisson transform}\label{subsec:gaillard}
In his article \cite{gaillard1} Gaillard considers the vector-valued Poisson transform to which we refer in Theorem~\ref{thm:gaillard} in the special case of $\Gamma$-invariant elements. His notation and 
conventions are however quite different from ours. In the following we will translate his results into the form stated in Theorem~\ref{thm:gaillard}. 

Gaillard proves in \cite[Therems 2', 3']{gaillard1} that \emph{slowly growing} co-closed $p$-forms on $\mathbb H^{n+1}$ in appropriate eigenspaces of the Hodge Laplacian on $\mathbb H^{n+1}$ are the Poisson transforms of  $p$-currents on $K/M$. When considering only $p$-forms on $\mathbb H^{n+1}$ that are $\Gamma$-invariant with respect to the action of $\Gamma$ by pullbacks, and which we identify with $p$-forms on the compact quotient $\M=\gam\mathbb H^{n+1}$ in Theorem \ref{thm:gaillard}, the slow growth condition becomes redundant. The remaining task is to relate Gaillard's pullback $G$-actions on $p$-currents to our principal series representations of $G$ on distributional sections. 

We will denote the space of $p$-currents on $K/M$ by
$\mathcal D_p'(K/M):=(\Omega^{n-p}(K/M))'$, and we have the canonical 
dense embedding $\Omega^p(K/M)\hooklongrightarrow \mathcal D_p'(K/M)$. As $G$ acts by diffeomorphisms on $K/M$ the pullback action on $\mathcal D'_p(K/M)$ provides a 
$G$-representation.
\begin{lem}\label{lem:compat}
The pullback action of $G$ on the space $\mathcal D'_p(K/M)$ of  $p$-currents is equivalent to the principal series representation $\pi^{p-n/2}_{\tau_p}$ on $\D'(K/M,\mathcal V^{\mathcal B}_{\tau_p})$. 
\end{lem}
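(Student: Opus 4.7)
The plan is to proceed in two stages: first, establish a canonical isomorphism of topological vector spaces between $\mathcal D_p'(K/M)$ and $\D'(K/M,\V^{\mathcal{B}}_{\tau_p})$; second, verify that under this isomorphism the pullback $G$-action on $p$-currents translates into the principal series action $\pi^{p-n/2}_{\tau_p}$. For the identification of spaces, recall from Section~\ref{subsec:Lie} that $T(K/M)=K\times_{\Ad(M)}\m^{\perp_{\k}}$ with $\Ad(M)|_{\m^{\perp_\k}}$ equivalent to the defining representation of $\SO(n)$ on $\R^n$. Taking duals and exterior powers, $\Lambda^p T^*(K/M)\cong \V^{\mathcal{B}}_{\tau_p}$ (with $\tau_p$ self-dual via the standard inner product). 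Combined with the canonical $K$-invariant volume form on $K/M$, which gives an $M$-equivariant identification $(\V^{\mathcal{B}}_{\tau_p})^*\otimes\mathrm{Dens}\cong\V^{\mathcal{B}}_{\tau_{n-p}}$, this yields $\mathcal D_p'(K/M)=(\Omega^{n-p}(K/M))'\cong \D'(K/M,\V^{\mathcal{B}}_{\tau_p})$ via the non-degenerate pairing $\int_{K/M}\omega\wedge\eta$. A routine change-of-variables argument shows that the dense embedding $\Omega^p\hookrightarrow \mathcal D_p'$ intertwines both $G$-actions under study, so by continuity the matching reduces to the subspace of smooth $p$-forms.

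The core computation will be the determination of the Jacobian of the diffeomorphism $kM\mapsto k^-(g^{-1}k)M$ of $K/M$. Lifting to the map $k\mapsto k^-(g^{-1}k)$ on $K$ and working in the identifications $T_{kM}(K/M)\cong \m^{\perp_\k}$ via $dL_k$ and $T_{k^-(g^{-1}k)M}(K/M)\cong \m^{\perp_\k}$ via $dL_{k^-(g^{-1}k)}$, I would write $g^{-1}k=k^-(g^{-1}k)\,a'\,n'$ with $a'=e^{H'H_0}\in A$, $n'\in N^-$, $H':=H^-(g^{-1}k)$, and take the first-order $t$-expansion of the Iwasawa decomposition of $a'n'\exp(tX)$ for $X\in\m^{\perp_\k}$. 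This yields the equation
\begin{equation*}
\Ad(a'^{-1})\kappa_0+\alpha_0+\Ad(n')\nu_0=\Ad(n')X\qquad\text{in }\g=\m\oplus\aL\oplus\nL^+\oplus\nL^-,
\end{equation*}
with unknowns $\kappa_0\in\k$, $\alpha_0\in\aL$, $\nu_0\in\nL^-$. Decomposing $X=V+\theta V$ with $V\in\nL^+$ (using $\m^{\perp_\k}=\{W+\theta W:W\in\nL^+\}$ and the Cartan involution $\theta$) as well as $\kappa_0=\kappa_0^{\m}+W+\theta W$ with $W\in\nL^+$, and projecting the above equation onto its $\nL^+$-summand, I obtain $V=e^{-H'}W$: indeed, $\Ad(N^-)$ acts trivially on $\nL^-$ by abelianness, and sends $V\in\nL^+$ into $V+\m+\aL+\nL^-$ (using $[\nL^-,\nL^+]\subset\m\oplus\aL$ and $[\nL^-,\nL^-]=0$), so that the $\nL^+$-component of $\Ad(n')X$ reduces to $V$, while $[H_0,W]=W$ gives the $\nL^+$-component of $\Ad(a'^{-1})\kappa_0$ as $e^{-H'}W$. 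Thus $W=e^{H'}V$ and $\kappa_0\equiv e^{H'}X\pmod{\m}$, so the Jacobian in question is scalar multiplication by $e^{H^-(g^{-1}k)}$ in our identifications.

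Pulling back a $p$-form therefore introduces the conformal factor $e^{pH^-(g^{-1}k)}$, yielding
\begin{equation*}
\overline{g\cdot\omega}(k)=e^{pH^-(g^{-1}k)}\bar\omega(k^-(g^{-1}k)),
\end{equation*}
which matches formula \eqref{eq:compactp} precisely with $\lambda+n/2=p$. This completes the verification on smooth forms, and the result extends to $p$-currents by continuity and duality. The main obstacle in this plan is the explicit Lie-algebraic computation of the Iwasawa-projected Jacobian; once that is established, all remaining steps amount to careful bookkeeping with the identifications between Lie-theoretic and geometric objects, in particular tracking the interplay between the Bruhat decomposition and the Cartan involution.
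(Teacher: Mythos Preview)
Your proposal is correct and follows essentially the same approach as the paper: both compute the differential of the $G$-action $kM\mapsto k^-(g^{-1}k)M$ on $K/M$ and show it acts on $\m^{\perp_\k}$ as scalar multiplication by $e^{H^-(g^{-1}k)}$, then read off the factor $e^{pH^-(g^{-1}k)}$ on $p$-forms and match it with \eqref{eq:compactp}. The only difference is organizational: the paper rewrites the derivative directly as $\pr_\k^-\bigl(\Ad(a^-n^-)Y\bigr)$ and analyzes this expression, whereas you set up the infinitesimal Iwasawa equation $\Ad(a'^{-1})\kappa_0+\alpha_0+\Ad(n')\nu_0=\Ad(n')X$ and solve for the $\m^{\perp_\k}$-part of $\kappa_0$; in both cases the decisive step is the same projection onto the $\nL^+$-summand in the Bruhat decomposition using $X=V+\theta V$.
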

\begin{proof}
Denote by $\m^{\perp_\k}\subset \k$ the orthogonal complement of $\m$ in $\k$. Then $M$ acts via the adjoint action on $\m^{\perp_\k}$. Recall from Section \ref{subsec:Lie} that  
$\m^{\perp_\k} \cong \R^n$ and $\tu{Ad}(M)|_{\m^{\perp_\k}}$ is 
nothing but the standard action of $\SO(n)$ on $\R^n$.
In the following, we shall write simply $\mathrm{Ad}(M)$ instead of $\mathrm{Ad}(M)|_{\m^{\perp_\k}}$. Note that there is a canonical identification
\begin{equation}\label{eq:ident_tangen_KM} 
K\times_{\mathrm{Ad}(M)}\m^{\perp_\k}\cong T(K/M)~\tu{ by }~[k,Y]\mapsto \frac{d}{dt}\Big|_{t=0} ke^{tY}M. 
\end{equation}
Let $g\in G$ and $\alpha_g: kM \mapsto k_-(gk)M$ be the 
diffeomorphism on $K/M$ given by the left-$G$-action, then the derivative $d\alpha_g$ acts on $T(K/M)$. In order to prove our lemma we have to determine
how $d\alpha_g$ acts on $K\times_{\mathrm{Ad}(M)}\m^{\perp_\k}$ under the
identification \eqref{eq:ident_tangen_KM}. We have for $[k,Y] \in T(K/M)$
\begin{align}\begin{split}
d\alpha_g\left( [k,Y]\right) = \frac{d}{dt}\Big|_{t=0} k^-(gke^{Yt}) M&\cong \left[k^-(gk),\frac{d}{dt}\Big|_{t=0}k^-(gk)^{-1}k^-(gk\exp(tY))\right]\\
&=\left[k^-(gk),\frac{d}{dt}\Big|_{t=0}k^-\big(a^-(gk)n^-(gk)\exp(tY)n^-(gk)^{-1}a^-(gk)^{-1}\big)\right]\\
&=\left[k^-(gk),\pr_\k^-\mathrm{Ad}(a^-(gk)n^-(gk))(Y)\right],\label{eq:DLkL}\end{split}
\end{align}
where
\begin{equation}
\pr_\k^-={d} k^-|_e: \g \to \g=\k\oplus\aL\oplus \nL^- \label{eq:Dk}
\end{equation} 
is the projection onto $\k$ defined by the opposite Iwasawa decomposition of $\g$. 

We can now proceed by studying for fixed $g\in G$, $k\in K$, $Y\in \m^{\perp_\k}$ the element
\begin{equation}
 \pr_\k^-\mathrm{Ad}(a^-(gk)n^-(gk))(Y) \in \m^{\perp_\k}.\label{eq:el}
\end{equation}
By the orthogonal Bruhat decomposition $\g=\m\oplus \aL\oplus \nL^+\oplus\nL^-$ and the fact that $\aL$ lies in the orthogonal complement of $\k$ in $\g$, we have $\m^{\perp_\k}\subset \nL^+\oplus\nL^-$, so we can write $Y=Y^++Y^-$ with $Y^\pm \in \nL^\pm$ and $\theta Y^\pm = Y^\mp$. 
The space $\nL^\pm$ is $\Ad(AN^\pm)$-invariant. Consequently $\mathrm{Ad}(a^-(gk)n^-(gk))(Y^-)\in \nL^-$, so $\pr_\k^-\mathrm{Ad}(a^-(gk)n^-(gk))(Y^-)=0$ 
by the opposite Iwasawa decomposition. This shows that only $Y^+$ contributes to \eqref{eq:el}. 
Let us write $n^-(gk)=\exp(N)$ with $N\in \nL^-$. Then we get
\[
\mathrm{Ad}(n^-(gk))(Y^+)=e^{\mathrm{ad}(N)}(Y^+)=Y^+  +\underbrace{[N,Y^+]}_{\in \g_{0}=\m\oplus \aL}+ 
 \frac{1}{2} \underbrace{[N,[N,Y^+]]}_{\in \nL_-}.
\]
Here we use that $\g= \g_0 \oplus \nL^+\oplus \nL^-$ is the root-space 
decomposition of $\g=\mathfrak{so}(n+1,1)$ and consequently
\[
 \nL^+\overset{\ad(N)}{\longrightarrow}\g_0\overset{\ad(N)}{\longrightarrow}\nL^-\overset{\ad(N)}{\longrightarrow}0.
\]
Furthermore, the map $\mathrm{Ad}(a^-(gk))$ acts on $\nL^\pm$ by scalar
multiplication with $e^{\pm H^-(gk)}$ and leaves $\g_0=\m\oplus \aL$ invariant. 
The opposite Iwasawa projection  $\pr_\k^-$ maps $\nL^-$ to $0$ and  the space $\g_{0}$ onto $\m$. However, the Lie algebra element considered in \eqref{eq:el} is by construction in $\m^{\perp_\k}$. We therefore arrive at
\[
\pr_\k^-\mathrm{Ad}(a^-(gk)n^-(gk))(Y)=\pr_\k^-\left(e^{ H^-(gk)}Y^+\right).
\]
Writing 
\[
Y^+ =\underbrace{Y^+ + \theta Y^+}_{\in \k}- \underbrace{\theta Y^+}_{\in \nL^-}
~~\tu{ reveals }~~
\pr_\k^-\mathrm{Ad}(a^-(gk)n^-(gk))(Y)
=e^{H^-(gk)}Y.
\]
In summary, we have proved that
\begin{equation}\label{eq:differential_on_KM}
d\alpha_g ([k,Y])=\big[k^-(gk), e^{H^-(gk)}Y\big].
\end{equation}
Finally, note that $T(K/M)\cong K\times_{\mathrm{Ad}(M)}\m^{\perp_\k}$ induces for each $p\in \{1,2,\ldots\}$ an isomorphism $\Lambda^p T^\ast(K/M)\cong K\times_{\Lambda^p\mathrm{Ad}^\ast(M)}\Lambda^p(\m^{\perp_\k})^\ast$. Under that isomorphism,
a $p$-form $s\in \Gamma^\infty(\Lambda^p T^\ast(K/M))$ corresponds to a section $\hat s\in \Gamma^\infty(K\times_{\Lambda^p\mathrm{Ad}^\ast(M)}\Lambda^p(\m^{\perp_\k})^\ast)$, and by our above computations the pullback action $gs\equiv (g^{-1})^\ast s$ of an element $g\in G$ on $s$ corresponds to the following action on $\hat s$: 
\begin{align}\begin{split}
\overline{(g \hat s)}(k)(X_1,\ldots,X_p)&=\overline{\hat s}(k^-(g^{-1}k))(e^{H^-(g^{-1}k)}X_1,\ldots,e^{H^-(g^{-1}k)}X_p)\\
&=e^{pH^-(g^{-1}k)}\overline{\hat s}(k^-(g^{-1}k))(X_1,\ldots,X_p)\qquad\forall\; X_1,\ldots,X_p\in \nL^\pm,\;k\in K.\label{eq:58036809182}\end{split}
\end{align}
Recalling the definition \eqref{eq:compactp} of the principal series representations, and taking into account that the pullback action of $G$ on $p$-currents as well as the principal series representations of $G$ on  distributional sections of $\Lambda^pT^\ast(K/M)$ are the continuous extensions of the respective actions on smooth $p$-forms, the proof is complete.
\end{proof}
For the definition of his Poisson transform Gaillard generalizes his setting to currents with values in complex line bundles $D^s\to K/M$ parametrized by a complex number $s\in\C$. Let us recall their construction \cite[Section 2.2]{gaillard1}: 
It is based on a $G$-invariant function\footnote{Here $G$ acts on all three factors in the domain $G/K\times K/M\times G/K$ by left multiplication.}
\begin{equation}Q:G/K\times K/M\times G/K\to \C\setminus\{0\},\qquad 
Q(gK,kM,eK)=\Vert D(V^{-1}_{gK}\circ V_{eK})|_{kM} \Vert,\label{eq:Q}
\end{equation}
where Gaillard's ``application visuelle'' $V_{gK}: S^\ast_{gK}(G/K)\to K/M$, $gK\in G/K$, is defined by
\bqn
V_{gK}: \{\tilde gM:\tilde gK=gK\}=S^\ast_{gK}(G/K)\to K/M,\qquad 
\tilde gM \mapsto k^-(\tilde g) M.
\eqn
A straightforward calculation similar to the proof of Lemma~\ref{lem:compat} shows that
\begin{equation}
Q(gK,kM,eK)=e^{H^-(g^{-1}k)},
\label{eq:Q2}
\end{equation}
which gives us by the $G$-invariance of $Q$ for a general element $(\tilde gK,kM,gK)\in G/K\times K/M\times G/K$:
\begin{align}\begin{split}
Q(\tilde gK,kM,gK)&=Q(g(g^{-1}\tilde gK,k^-(g^{-1}k)M,eK))\\
&=Q(g^{-1}\tilde gK,k^-(g^{-1}k)M,eK)\\
&=e^{H^-(\tilde g^{-1}g k^-(g^{-1}k))}.
\label{eq:Q3}
\end{split}
\end{align}
With these preparations, let us now turn to Gaillard's definition of the line bundle $D^s$ over $K/M$: Introduce an equivalence relation $\sim_s$ on $G/K\times K/M\times \C$ by
\begin{equation*}
(gK,kM,z)\sim_s (\tilde gK,\tilde kM,\tilde z)
\iff kM=\tilde kM,\;\tilde z =Q(\tilde gK,kM,gK)^{-s} z=e^{-sH^-(\tilde g^{-1}g k^-(g^{-1}k))}z,
\end{equation*}
and declare $D^s:=G/K\times K/M\times \C/\sim_s$ with bundle projection  $[gK,kM,z]\mapsto kM$. The bundle is a homogeneous $G$-bundle by defining the $G$ action as 
\[
g'[gK,kM,z]:=[g'gK,g'(kM),z]=[g'gK,k^-(g'k)M,z].
\]
The stabilizer subgroup of $eM\in K/M$ with respect to the left-$G$-action on $K/M$ is $MAN^-$ and the action of the stabilizer group on the fiber of $D^s$ over $eM$ is 
\[
 [manK , eM, z] = [eK, eM, e^{-sH(mank^-(n^{-1}a^{-1}m^{-1}))}z]=  [eK, eM,  e^{-s\log(a)}z].
\]
If we define the $MAN^-$-representation $\sigma_s$ by $man\mapsto e^{-s\log(a)} \in \C$ then we can identify $D^s$ with the associated line bundle
$G \times_{\sigma_s}\C \to G/(MAN^-)\cong K/M$. Thus the $G$-action on sections of this homogenous bundle is equivalent to the principle series representation
$\pi^{s}_{\mathbb 1}$, where $\mathbb 1$ denotes the trivial $M$-representation on $\C$. By Lemma~\ref{lem:compat} we know that the pullback action on $p$-currents is equivalent to $\pi^{p-n/2}_{\tau_p}$, so the action of $G$ on $D^s$-valued currents is
equivalent to $\pi^{p-n/2}_{\tau_p}\otimes \pi^{s}_{\mathbb{1}}$
which is equivalent to $\pi^{p+s-n/2}_{\tau_p}$.

\section{Non-constant curvature perturbations}\label{sec:nonconst}
We now address the question how the equality 
$m_{\mathcal L_X, X^\perp}(0)=b_1(\mathcal M)$ for
constant negative curvature manifolds behaves under 
perturbations of the Riemannian metric and also under more general perturbations of the vector field $X$ that do not (only) result from metric perturbations. Throughout 
this section, let $\mathcal M$ be a closed orientable  manifold  admitting a hyperbolic metric and $\Gamma^\infty(\mathrm{S}^2(T^*\M))$ the space of symmetric two-tensors endowed with the Fr\'echet topology. 
Let $\mathscr R_{\mathcal M,<0}\subset \Gamma^\infty(\mathrm{S}^2(T^*\M))$ be the open 
subset of Riemannian metrics of negative sectional 
curvature. 
For any Riemannian metric $\mathscr g$ on $\M$, we write 
$X_{\mathscr g}\in \Gamma^\infty(T(S_{\mathscr g}^*\mathcal M))$ for the 
geodesic vector field on the unit sphere bundle $S_{\mathscr g}^*\mathcal M$ with respect to $\mathscr g$. In order to study perturbations of the vector field $X_{\mathscr g}$, we consider $S_{\mathscr g}^*\mathcal M$ as a Riemannian manifold equipped with the metric $\tilde {\mathscr g}$ induced by the Sasaki metric on $T^\ast \mathcal M$ with respect to $\mathscr g$  and define the $\mathrm{C}^1$-norm on $\Gamma^\infty(T(S_{\mathscr g}^*\mathcal M))$ by
\bq
\norm{Y}_{\mathrm{C}^1}:=\sup_{\xi \in S_{\mathscr g}^*\mathcal M}\big(\norm{Y(\xi)}_{\tilde {\mathscr g}} +\norm{(\nabla^{\tilde {\mathscr g}}Y)(\xi)}_{\tilde {\mathscr g}}\big),\qquad Y\in \Gamma^\infty(T(S_{\mathscr g}^*\mathcal M)), \label{eq:C1norm}
\eq
where $\nabla^{\tilde {\mathscr g}}:\Gamma^\infty(T(S_{\mathscr g}^*\mathcal M))\to \Gamma^\infty(T(S_{\mathscr g}^*\mathcal M)\otimes T^\ast(S_{\mathscr g}^*\mathcal M))$ is the Levi-Civita connection with respect to $\tilde {\mathscr g}$ and we denoted the metric obtained by extending $\tilde {\mathscr g}$ to the tensor bundle $T(S_{\mathscr g}^*\mathcal M)\otimes T^\ast(S_{\mathscr g}^*\mathcal M)$ also by $\tilde {\mathscr g}$.

With this notation at hand, we can  prepare the proof of our main Theorem \ref{thm:main2} which will be given on page \pageref{proof:perturb}. As already indicated in the introduction, we essentially reduce the proof to two steps: Lemma \ref{lem:perturbation} will provide a local upper bound for the multiplicity of an arbitrary resonance, while Lemma \ref{lem:lower_bound} will provide global lower bounds for the resonance zero. Finally, Lemma \ref{lem:perpall} relates the multiplicities of the resonance zero on general and on perpendicular one-forms.
\begin{lem}\label{lem:perturbation}
 For each $\lambda\in\C$ and each $\mathscr g_0\in \mathscr R_{\mathcal M,<0}$ there is an open set $U\subset \mathscr R_{\mathcal M,<0}$ containing $\mathscr g_0$ and a constant $\delta>0$ such that for all $\mathscr g\in U$ and all $Y_{\mathscr g}\in \Gamma^\infty(T(S_{\mathscr g}^*\mathcal M))$ with $\Vert Y_{\mathscr g}-X_{\mathscr g}\Vert_{\mathrm{C}^1}<\delta$ one has $$m_{\mathcal L_{Y_{\mathscr g}},T^\ast(S_{\mathscr g}^*\mathcal M)}(\lambda)\leq m_{\mathcal L_{X_{\mathscr g_0}}, T^\ast(S_{\mathscr g_0}^*\mathcal M)}(\lambda).$$
\end{lem}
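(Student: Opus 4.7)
The plan is to transfer the setup to the fixed manifold $S^\ast_{\mathscr g_0}\mathcal M$ via a diffeomorphism depending continuously on $\mathscr g$, and then apply the upper semicontinuity of Pollicott-Ruelle resonance multiplicities under $\mathrm{C}^1$-perturbations of Anosov vector fields on a fixed manifold established in \cite{Bon18}. This reduction is essentially the first two bullet points outlined in the sketch of Theorem \ref{thm:main2} in the introduction.

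First, I would construct a smooth family of diffeomorphisms $\Phi_{\mathscr g} : S^\ast_{\mathscr g_0}\mathcal M \to S^\ast_{\mathscr g}\mathcal M$ by fibre-wise rescaling, sending $(x,\xi) \mapsto (x, \xi/\Vert \xi\Vert_{\mathscr g(x)^{-1}})$, where $\Vert\cdot\Vert_{\mathscr g(x)^{-1}}$ is the dual norm on $T^\ast_x\mathcal M$. Then $\Phi_{\mathscr g_0}=\mathrm{id}$ and $\mathscr g\mapsto \Phi_{\mathscr g}$ is continuous from a neighbourhood of $\mathscr g_0$ in $\mathscr R_{\mathcal M,<0}$ into $\mathrm{Diff}^\infty(S^\ast_{\mathscr g_0}\mathcal M, S^\ast_{\mathscr g}\mathcal M)$ in any $\mathrm{C}^k$-topology. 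Pulling back by $\Phi_{\mathscr g}$ yields an isomorphism on distributional one-forms intertwining the Lie derivatives, namely $\Phi_{\mathscr g}^\ast \mathcal L_{Y_{\mathscr g}} = \mathcal L_{Y_{\ast\mathscr g}} \Phi_{\mathscr g}^\ast$ with $Y_{\ast\mathscr g}:=(\Phi_{\mathscr g})_\ast^{-1} Y_{\mathscr g}$. Since diffeomorphisms preserve the wavefront characterization \eqref{eq:WF_characterization} of generalized resonant states (they transport $E_+^\ast$ equivariantly), one obtains
\[
m_{\mathcal L_{Y_{\mathscr g}}, T^\ast(S^\ast_{\mathscr g}\mathcal M)}(\lambda) = m_{\mathcal L_{Y_{\ast\mathscr g}}, T^\ast(S^\ast_{\mathscr g_0}\mathcal M)}(\lambda).
\]

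Second, I would check that whenever $\mathscr g$ is close enough to $\mathscr g_0$ and $\Vert Y_{\mathscr g}-X_{\mathscr g}\Vert_{\mathrm{C}^1}<\delta$ for $\delta$ small, the transferred vector field $Y_{\ast\mathscr g}$ is $\mathrm{C}^1$-close to $X_{\mathscr g_0}$ on the fixed manifold $S^\ast_{\mathscr g_0}\mathcal M$. By the triangle inequality,
\[
\Vert Y_{\ast\mathscr g}-X_{\mathscr g_0}\Vert_{\mathrm{C}^1} \leq \Vert Y_{\ast\mathscr g}-X_{\ast\mathscr g}\Vert_{\mathrm{C}^1} + \Vert X_{\ast\mathscr g}-X_{\mathscr g_0}\Vert_{\mathrm{C}^1},
\]
where the norm on the left is computed with the Sasaki metric of $\tilde{\mathscr g_0}$. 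The second term tends to zero because $\mathscr g\mapsto X_{\ast\mathscr g}\in\Gamma^\infty(T(S^\ast_{\mathscr g_0}\mathcal M))$ is continuous and agrees with $X_{\mathscr g_0}$ at $\mathscr g=\mathscr g_0$. The first term is bounded by the operator norm of $(\Phi_{\mathscr g})_\ast^{-1}$ (uniformly in $\mathscr g$ near $\mathscr g_0$) times $\Vert Y_{\mathscr g}-X_{\mathscr g}\Vert_{\mathrm{C}^1}$, once one has verified that the $\mathrm{C}^1$-norms on $S^\ast_{\mathscr g}\mathcal M$ and on $S^\ast_{\mathscr g_0}\mathcal M$ are uniformly equivalent under $\Phi_{\mathscr g}^\ast$ for $\mathscr g$ in a small neighbourhood of $\mathscr g_0$; this is a standard compactness argument since $\Phi_{\mathscr g}$ and its first derivatives depend continuously on $\mathscr g$.

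Finally, Bonthonneau's result \cite{Bon18} applied to the fixed Anosov flow generated by $X_{\mathscr g_0}$ on $S^\ast_{\mathscr g_0}\mathcal M$, acting on the fixed bundle $T^\ast(S^\ast_{\mathscr g_0}\mathcal M)$, states that there exists $\delta'>0$ such that any first-order differential operator whose principal symbol is induced by a vector field $Z$ with $\Vert Z-X_{\mathscr g_0}\Vert_{\mathrm{C}^1}<\delta'$ has resonance multiplicity at $\lambda$ bounded above by $m_{\mathcal L_{X_{\mathscr g_0}}, T^\ast(S^\ast_{\mathscr g_0}\mathcal M)}(\lambda)$. Combining this with the previous two steps yields the desired inequality. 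The main obstacle I anticipate is verifying the hypothesis of Bonthonneau's theorem in our setting — in particular ensuring that upper semicontinuity holds for the Lie derivative on $T^\ast(S^\ast_{\mathscr g_0}\mathcal M)$ (which is an admissible lift in the sense of Section~\ref{subsec:rp_res}) under perturbations that are only $\mathrm{C}^1$-small — and carefully controlling the norm equivalence between the $\tilde{\mathscr g}$- and $\tilde{\mathscr g_0}$-Sasaki $\mathrm{C}^1$-norms after transfer through $\Phi_{\mathscr g}$, so that one genuine $\delta$ works for both the transfer estimate and Bonthonneau's threshold $\delta'$.
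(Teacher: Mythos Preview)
Your proposal is correct and follows essentially the same route as the paper: transfer everything to the fixed manifold $S^\ast_{\mathscr g_0}\mathcal M$ via the fibre-wise rescaling diffeomorphism, use naturality of the Lie derivative to equate multiplicities, control $\Vert Y_{\ast\mathscr g}-X_{\mathscr g_0}\Vert_{\mathrm{C}^1}$ via the two-term triangle inequality, and then invoke \cite{Bon18}. The paper adds two small points you leave implicit: it appeals to structural stability \cite{kato73} to guarantee that $Y_{\ast\mathscr g}$ is Anosov before applying \cite{Bon18}, and it remarks explicitly that Bonthonneau's scalar result extends to the vector-bundle setting by replacing the scalar quantization with a vector-valued one---precisely the obstacle you anticipated.
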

\begin{proof} Fix some reference metric $\mathscr g_0\in \mathscr R_{\mathcal M,<0}$ for the rest of the proof and let $\mathscr g$ be some arbitrary Riemannian metric on $\M$. Then the diffeomorphism
$
\phi_{\mathscr g}: S_{\mathscr g}^*\mathcal M\to S_{\mathscr g_0}^*\mathcal M$, $\xi \mapsto \norm{\xi}_{\mathscr g_0}^{-1}\xi
$, 
fulfills
\bq
\Vert d\phi_{\mathscr g}v\Vert_{\tilde {\mathscr g}_0}=\norm{\xi}_{\mathscr g_0}^{-1}\norm{v}_{\tilde {\mathscr g}}\qquad \forall\; v\in T_{\xi}(S_{\mathscr g}^*\mathcal M),\;\xi \in S_{\mathscr g}^*\mathcal M .\label{eq:scalingdphi}
\eq
For a vector field $Y_{\mathscr g}\in \Gamma^\infty(T(S_{\mathscr g}^*\mathcal M))$, consider its pushforward $Y_{\ast\mathscr g}:=(\phi_{\mathscr g})_\ast Y_{\mathscr g}\in \Gamma^\infty(T(S_{\mathscr g_0}^*\mathcal M))$. By the naturality of the Lie derivative with respect to pullbacks, the following diagram commutes:
\begin{center}\begin{tikzpicture}[node distance=1cm and 2cm, auto]
\node (A) {$\D'(S_{\mathscr g}^*\mathcal M,T^\ast(S_{\mathscr g}^*\mathcal M) )$}; 
\node (B) [right= of A] {$\D'(S_{\mathscr g}^*\mathcal M,T^\ast(S_{\mathscr g}^*\mathcal M) )$};
\node (C) [below= of A] {$\D'(S_{\mathscr g_0}^*\mathcal M,T^\ast (S_{\mathscr g_0}^*\mathcal M))$};
\node (D) [below= of B] {$\D'(S_{\mathscr g_0}^*\mathcal M,T^\ast(S_{\mathscr g_0}^*\mathcal M) )$};
\draw[->] (A) to node {$\mathcal L_{Y_{\mathscr g}}$} (B); 
\draw[->] (C) to node {$\mathcal L_{Y_{\ast\mathscr g}}$} (D); 
\draw[<-] (A) to node {$\phi_{\mathscr g}^\ast$} (C);
\draw[->] (B) to node {$(\phi_{\mathscr g}^{-1})^\ast$} (D);  
\end{tikzpicture}\end{center}
By comparing the pushforward connection $(\phi_{\mathscr g})_\ast\nabla^{\tilde {\mathscr g}}$ with $\nabla^{\tilde {\mathscr g_0}}$ using the Koszul formula, one checks that
\bqn
\Vert Y_{\ast\mathscr g}-Y'_{\ast\mathscr g}\Vert_{\mathrm{C}^1}\leq C_{\mathscr g}\Vert Y_{\mathscr g}-Y'_{\mathscr g}\Vert_{\mathrm{C}^1}\qquad \forall\; Y_{\mathscr g},Y'_{\mathscr g}\in \Gamma^\infty(T(S_{\mathscr g}^*\mathcal M))
\eqn
with a constant $C_{\mathscr g}>0$ that depends continuously on $\mathscr g$ with respect to the Fr\'echet topology on $\Gamma^\infty(\mathrm{S}^2(T^*\M))$. Furthermore, the geodesic vector fields $X_{\mathscr g}$ and  $X_{\mathscr g_0}$ fulfill 
\bqn
\Vert X_{\ast\mathscr g}-X_{\mathscr g_0}\Vert_{\mathrm{C}^1}\to 0\quad \text{as }\mathscr g\to \mathscr g_0\text{ in } \Gamma^\infty(\mathrm{S}^2(T^*\M)).
\eqn
Thus, for every $\eps>0$ we can find an open set $U\subset \mathscr R_{\mathcal M,<0}$ containing $\mathscr g_0$ and a $\delta>0$ such that 
\bq
\Vert Y_{\ast\mathscr g}-X_{\mathscr g_0}\Vert_{\mathrm{C}^1}< \eps\qquad \forall\; \mathscr g\in U,\; Y_{\mathscr g}\in \Gamma^\infty(T(S_{\mathscr g}^*\mathcal M)): \Vert Y_{\mathscr g}-X_{\mathscr g}\Vert_{\mathrm{C}^1}< \delta.\label{eq:continuityY}
\eq
Choosing $\eps$ small enough, the structural stability (see \cite[Thm.\ A]{kato73}) of the Anosov property of vector fields on the $\mathscr g$-independent manifold $S_{\mathscr g_0}^*\mathcal M$ allows us to assume from now on that $Y_{\ast\mathscr g}$ is Anosov for all $\mathscr g\in U$. Then also $Y_{\mathscr g}$ is Anosov for all $\mathscr g\in U$. Indeed, the Anosov splitting of $Y_{\mathscr g}$ is obtained by applying $d\phi_{\mathscr g}^{-1}$ to the Anosov splitting of $Y_{\ast\mathscr g}$. 
 In view of the commutative diagram above one has
\bq
  m_{\mathcal L_{Y_{\mathscr g}},T^\ast(S_{\mathscr g}^*\mathcal M)}(\lambda)=m_{\mathcal L_{Y_{\ast\mathscr g}},T^\ast(S_{\mathscr g_0}^*\mathcal M) }(\lambda)\qquad \forall\; \lambda \in \C.\label{eq:multiplagree}
 \eq 
Given some $\lambda \in \C$ we now apply the perturbation result \cite{Bon18}, which says that on every closed manifold the resonances of all Anosov vector fields $Y$ that are $\mathrm{C}^1$-close to a given Anosov vector field $Y_0$ can be defined as eigenvalues in certain Hilbert spaces that depend only on $Y_0$ and not on $Y$, so that the change of the multiplicity of $\lambda$ in this fixed Hilbert space can be measured as $Y$ varies near $Y_0$. The results of \cite{Bon18} generalize  easily to a vector-valued situation (for vector bundles that do not vary with the vector field $Y$) by replacing the scalar  quantization map in \cite[Eq.\ (2)]{Bon18} by a vector-valued quantization map. The correspondingly generalized  \cite[Cor.\ 2]{Bon18} then implies that there is a  $c >0$ such that all $Y\in \Gamma^\infty(T(S_{\mathscr g_0}^*\mathcal M))$ with $\Vert Y-X_{\mathscr g_0} \Vert_{\mathrm{C}^1}< c$ fulfill \bq
m_{\mathcal L_{Y},T^\ast(S_{\mathscr g_0}^*\mathcal M)}(\lambda)\leq m_{\mathcal L_{X_{\mathscr g_0}},T^\ast(S_{\mathscr g_0}^*\mathcal M)}(\lambda).\label{eq:ineqBonth}
\eq
Choosing $\eps<c$ in \eqref{eq:continuityY}, we can put $Y=Y_{\ast \mathscr g}$ in \eqref{eq:ineqBonth} for each $\mathscr g\in U$, and by \eqref{eq:multiplagree} the proof is finished.
\end{proof}
A second ingredient to Theorem~\ref{thm:main2} is a very general lower bound on the  multiplicity of the resonance zero:
\begin{lem}~\label{lem:lower_bound}
 For some Riemannian metric $\mathscr g$ on $\M$, let  $Y_{\mathscr g}\in \Gamma^\infty(T(S_{\mathscr g}^*\mathcal M))$ be an Anosov vector field. Then $$m_{\mathcal L_{Y_{\mathscr g}},T^*(S_{\mathscr g}^*\mathcal M)}(0)\geq b_1(S^*_{\mathscr g}\mathcal M),$$ 
 and if there is a one-form $\alpha_{\mathscr g}$ on $S_{\mathscr g}^*\mathcal M$  with  $\mathcal L_{Y_{\mathscr g}}\alpha_{\mathscr g}=0$ and $d\alpha_{\mathscr g}\neq 0$, then  $$m_{\mathcal L_{Y_{\mathscr g}},T^*(S_{\mathscr g}^*\mathcal M)}(0)\geq b_1(S^*_{\mathscr g}\mathcal M) +b_0(S^*_{\mathscr g}\mathcal M).$$
\end{lem}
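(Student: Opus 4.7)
The plan is to deduce both bounds from the theorem of Dang--Rivi\`ere \cite{DR17b} recalled in the introduction, which asserts that a finite-dimensional cochain complex built from the generalized Pollicott--Ruelle resonance zero spaces of an Anosov flow is quasi-isomorphic to the de Rham complex. Concretely, I would set $C^p := \Res_{\mathcal L_{Y_{\mathscr g}},\Lambda^p T^*(S_{\mathscr g}^*\mathcal M)}(0)$ and note that Cartan's magic formula $\mathcal L_{Y_{\mathscr g}} = d\,\iota_{Y_{\mathscr g}} + \iota_{Y_{\mathscr g}}\,d$ shows that $d$ commutes with $\mathcal L_{Y_{\mathscr g}}$, so that $d$ restricts to maps $C^p \to C^{p+1}$, yielding the finite-dimensional complex $(C^\bullet,d)$. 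The Dang--Rivi\`ere result then supplies $\dim_{\mathbb C} H^p(C^\bullet,d) = b_p(S_{\mathscr g}^*\mathcal M)$ for every $p$.

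The unconditional bound is then immediate by rank--nullity:
\begin{equation*}
m_{\mathcal L_{Y_{\mathscr g}},T^*(S_{\mathscr g}^*\mathcal M)}(0) = \dim_{\mathbb C} C^1 \geq \dim \ker(d|_{C^1}) \geq \dim H^1(C^\bullet,d) = b_1(S_{\mathscr g}^*\mathcal M).
\end{equation*}

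For the refined bound under the hypothesis on $\alpha_{\mathscr g}$, the decomposition
\begin{equation*}
\dim C^1 = \dim \ker(d|_{C^1}) + \dim \mathrm{im}(d|_{C^1}) \geq b_1(S_{\mathscr g}^*\mathcal M) + \dim \mathrm{im}(d|_{C^1})
\end{equation*}
reduces the task to exhibiting a $b_0(S_{\mathscr g}^*\mathcal M)$-dimensional subspace of $\mathrm{im}(d|_{C^1})$. Let $\mathcal H^0 \subset C^0$ denote the $b_0(S_{\mathscr g}^*\mathcal M)$-dimensional space of locally constant functions on $S_{\mathscr g}^*\mathcal M$. For any $f \in \mathcal H^0$ one has $\mathcal L_{Y_{\mathscr g}}(f\alpha_{\mathscr g}) = (Y_{\mathscr g}f)\alpha_{\mathscr g} + f\,\mathcal L_{Y_{\mathscr g}}\alpha_{\mathscr g} = 0$, so $f\alpha_{\mathscr g} \in C^1$, and $d(f\alpha_{\mathscr g}) = df\wedge\alpha_{\mathscr g} + f\,d\alpha_{\mathscr g} = f\,d\alpha_{\mathscr g}$ sits in $\mathrm{im}(d|_{C^1})$.

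The main obstacle will be to verify that the resulting assignment $\mathcal H^0 \to \mathrm{im}(d|_{C^1})$, $f \mapsto f\,d\alpha_{\mathscr g}$, is injective, which amounts to showing that $d\alpha_{\mathscr g}$ is not identically zero on any connected component of $S_{\mathscr g}^*\mathcal M$. The key input here is that $d\alpha_{\mathscr g}$ is a smooth $\varphi_t^{Y_{\mathscr g}}$-invariant $2$-form, since $\mathcal L_{Y_{\mathscr g}}\,d\alpha_{\mathscr g} = d\,\mathcal L_{Y_{\mathscr g}}\alpha_{\mathscr g} = 0$. In the connected case relevant for Theorem \ref{thm:main2}, $b_0(S_{\mathscr g}^*\mathcal M)=1$ and the hypothesis $d\alpha_{\mathscr g}\neq 0$ gives at once $\dim \mathrm{im}(d|_{C^1}) \geq 1$, settling the lemma. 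In the general multi-component setting one would need the further input that the nonvanishing of the smooth $\varphi_t^{Y_{\mathscr g}}$-invariant $2$-form $d\alpha_{\mathscr g}$ propagates to every connected component, an argument one would expect to run via a component-wise application of the same strategy combined with some form of topological transitivity of the Anosov flow on each component.
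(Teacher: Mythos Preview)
Your approach is essentially identical to the paper's: both invoke Dang--Rivi\`ere to get $\dim\ker(d|_{C^1})\ge b_1$, and both use the $b_0$-dimensional family $\{c\alpha_{\mathscr g}:c\text{ locally constant}\}\subset C^1$ of smooth (hence trivially wave-front-admissible) one-forms with $d(c\alpha_{\mathscr g})=c\,d\alpha_{\mathscr g}$ to produce the extra $b_0$ in the refined bound. Your scruple about the multi-component case is legitimate, but the paper simply asserts ``$d(c\alpha_{\mathscr g})\neq 0$ if $c\neq 0$'' without further comment; since the only application is to $S^*_{\mathscr g}\mathcal M$ with $\dim\mathcal M\ge 2$ and $\mathcal M$ connected (so $b_0=1$), the connected case you already handle is all that is actually needed.
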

\begin{rem} Lemma \ref{lem:lower_bound} remains true, with the same proof, if $S_{\mathscr g}^*\mathcal M$ is replaced by an arbitrary closed oriented manifold $\mathcal N$ and $Y_{\mathscr g}$ by an Anosov vector field $Y\in \Gamma^\infty(T\mathcal N)$. 
\end{rem}
\begin{proof}[Proof of Lemma \ref{lem:lower_bound}]
Fix some Riemannian metric $\mathscr g$ on $\M$. Dang-Rivi\`ere \cite{DR17b} proved that for every Anosov vector field $Y_{\mathscr g}\in \Gamma^\infty(T(S_{\mathscr g}^*\mathcal M))$ 
 \[
  0\to\tu{Res}_{\mathcal L_{Y_{\mathscr g}},\Lambda^0(T^*(S_{\mathscr g}^*\M))}(0) \overset{d}{\to}\tu{Res}_{\mathcal L_{Y_{\mathscr g}},\Lambda^1(T^*(S_{\mathscr g}^*\M))}(0)
  \overset{d}{\to}\cdots\overset{d}{\to} \tu{Res}_{\mathcal L_{Y_{\mathscr g}},\Lambda^{2\dim \M-1}(T^*(S_{\mathscr g}^*\M))}(0)\to 0
 \]
forms a finite-dimensional complex whose cohomology is isomorphic to the de Rham cohomology of $S_{\mathscr g}^*\mathcal M$. This implies $\dim (\tu{Res}_{\mathcal L_{Y_{\mathscr g}},T^*(S_{\mathscr g}^*\M)}(0) \cap \tu{ker}(d)) \geq b_1(S^*_{\mathscr g}\mathcal M)$, proving the first inequality. Now suppose that there is a one-form $\alpha_{\mathscr g}$ with  $\mathcal L_{Y_{\mathscr g}}\alpha_{\mathscr g}=0$ and $d\alpha_{\mathscr g}\neq 0$. By the wave front characterization of resonant states \eqref{eq:WF_characterization} we then know that $c\alpha_{\mathscr g} \in \tu{Res}_{\mathcal L_{Y_{\mathscr g}},T^*(S_{\mathscr g}^*\M)}(0)$ for each locally constant function $c$ on $S_{\mathscr g}^*\M$ (thus each element in the $0$-th de Rham cohomology). Since $d(c\alpha_{\mathscr g}) \neq 0$ if $c\neq 0$, the second inequality follows.
\end{proof}

We can now prove Theorem~\ref{thm:main2}:
\begin{proof}[Proof of Theorem \ref{thm:main2}]\label{proof:perturb}
Assume that $\dim \M\neq 3$ and let $\mathscr g_0\in \mathscr R_{\mathcal M,<0}$ be a metric of constant negative curvature. Then we can apply Proposition~\ref{prop:constant_mult_perp} and \eqref{eq:allforms} with $\mathscr g=\mathscr g_0$, $Y_{\mathscr g_0}=X_{\mathscr g_0}$, and $\alpha_{\mathscr g_0}$ the canonical contact form on $S^*_{\mathscr g_0}\mathcal M$ to get 
\bq
 m_{\mathcal L_{X_{\mathscr g_0}},T^\ast(S^*_{\mathscr g_0}\mathcal M)}(0) =m_{\mathcal L_{X_{\mathscr g_0}},X_{\mathscr g_0}^\perp}(0)+ b_0(\M)=b_1(\mathcal M)+b_0(\M).\label{eq:mb1}
\eq
Now, if $\mathscr g$ is any Riemannian metric on $\M$, then by \cite[(4.1)]{CS50} one has $b_1(\mathcal M)=b_1(S^*_{\mathscr g}\mathcal M)$, and we also have $b_0(\mathcal M)=b_0(S^*_{\mathscr g}\mathcal M)$ because $\dim \M\geq 2$ (otherwise $\M$ would not admit metrics of negative sectional curvature). Thus, it suffices to apply the local upper bound from Lemma \ref{lem:perturbation} for $\lambda=0$ and the global lower bounds from Lemma  \ref{lem:lower_bound} to finish  the proof of Theorem~\ref{thm:main2}.
\end{proof}
Finally, in order to get a statement involving resonance multiplicities on the bundle $Y_{\mathscr g}^\perp$, one can use the following basic result:
\begin{lem}\label{lem:perpall}For some Riemannian metric $\mathscr g$ on $\M$, let  $Y_{\mathscr g}\in \Gamma^\infty(T(S_{\mathscr g}^*\mathcal M))$ be an Anosov vector field. 

If there is a one-form $\alpha_{\mathscr g}$ on $S_{\mathscr g}^*\mathcal M$ with $\iota_{Y_{\mathscr g}}\alpha_{\mathscr g}=1$, $\iota_{Y_{\mathscr g}}d\alpha_{\mathscr g}=0$, and $T^\ast(S^\ast_{\mathscr g}\M)=\R\alpha_{\mathscr g} \oplus Y_{\mathscr g}^\perp$, then
\bq
m_{\mathcal L_{Y_{\mathscr g}},T^*(S_{\mathscr g}^*\mathcal M)}(0)=m_{\mathcal L_{Y_{\mathscr g}}, Y_{\mathscr g}^\perp}(0)+ b_0(\mathcal M).\label{eq:allforms}
\eq
\end{lem}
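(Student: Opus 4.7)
The plan is to exploit the hypothesized direct sum $T^*(S^*_{\mathscr g}\M)=\R\alpha_{\mathscr g}\oplus Y_{\mathscr g}^\perp$ to block-decompose $\mathcal L_{Y_{\mathscr g}}$ and reduce the identity \eqref{eq:allforms} to the standard scalar computation of the Anosov resonance multiplicity at zero.

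The first step is to check that the two summands are both preserved by $\mathcal L_{Y_{\mathscr g}}$. Using Cartan's magic formula together with the assumptions $\iota_{Y_{\mathscr g}}\alpha_{\mathscr g}=1$ and $\iota_{Y_{\mathscr g}}d\alpha_{\mathscr g}=0$,
\[
\mathcal L_{Y_{\mathscr g}}\alpha_{\mathscr g}=d\,\iota_{Y_{\mathscr g}}\alpha_{\mathscr g}+\iota_{Y_{\mathscr g}}d\alpha_{\mathscr g}=0,
\]
so $\R\alpha_{\mathscr g}$ is in fact pointwise annihilated. For $\omega\in\Gamma^\infty(Y_{\mathscr g}^\perp)$, the identity $[\mathcal L_{Y_{\mathscr g}},\iota_{Y_{\mathscr g}}]=\iota_{[Y_{\mathscr g},Y_{\mathscr g}]}=0$ yields $\iota_{Y_{\mathscr g}}\mathcal L_{Y_{\mathscr g}}\omega=\mathcal L_{Y_{\mathscr g}}\iota_{Y_{\mathscr g}}\omega=0$, so $Y_{\mathscr g}^\perp$ is also stable. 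Since the splitting is smooth, the fiberwise projections preserve the wave-front condition of \eqref{eq:WF_characterization}, so the decomposition passes to generalized resonant states:
\[
\Res_{\mathcal L_{Y_{\mathscr g}},T^*(S^*_{\mathscr g}\M)}(0)=\Res_{\mathcal L_{Y_{\mathscr g}},\R\alpha_{\mathscr g}}(0)\oplus\Res_{\mathcal L_{Y_{\mathscr g}},Y_{\mathscr g}^\perp}(0).
\]

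The line bundle $\R\alpha_{\mathscr g}$ is trivialized by $\alpha_{\mathscr g}$ itself (which is nowhere vanishing as $\iota_{Y_{\mathscr g}}\alpha_{\mathscr g}=1$), and from $\mathcal L_{Y_{\mathscr g}}\alpha_{\mathscr g}=0$ the Leibniz rule gives $\mathcal L_{Y_{\mathscr g}}(f\alpha_{\mathscr g})=(Y_{\mathscr g}f)\alpha_{\mathscr g}$. Consequently $\mathcal L_{Y_{\mathscr g}}$ on this summand is conjugate to the scalar action of $Y_{\mathscr g}$ on functions, and $\dim\Res_{\mathcal L_{Y_{\mathscr g}},\R\alpha_{\mathscr g}}(0)=m_{Y_{\mathscr g}}(0)$.

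The only nontrivial remaining step is the scalar identity $m_{Y_{\mathscr g}}(0)=b_0(\M)$; this is where I would expect the main (though still modest) difficulty to lie. Because $\dim\M\geq 2$, the sphere fibers are connected and $b_0(S^*_{\mathscr g}\M)=b_0(\M)$. It then suffices to invoke the standard fact that, for an Anosov vector field on a closed manifold, zero is a simple resonance (no Jordan block) with resonant states exactly the locally constant functions. One clean way to pin this down is via the Dang--Rivi\`ere cohomological description \cite{DR17b}: at degree zero the complex $(\Res_{\mathcal L_{Y_{\mathscr g}},\Lambda^\bullet T^*(S^*_{\mathscr g}\M)}(0),d)$ computes $H^0_{\mathrm{dR}}(S^*_{\mathscr g}\M)=\R^{b_0(\M)}$, and combined with the absence of Jordan blocks at zero this forces $\Res_{Y_{\mathscr g}}(0)$ to coincide with the locally constant functions, of dimension $b_0(\M)$. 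Assembling the three steps yields \eqref{eq:allforms}.
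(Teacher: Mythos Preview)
Your argument is correct and follows essentially the same route as the paper: use Cartan's formula to get $\mathcal L_{Y_{\mathscr g}}\alpha_{\mathscr g}=0$, check that both summands of $T^*(S^*_{\mathscr g}\M)=\R\alpha_{\mathscr g}\oplus Y_{\mathscr g}^\perp$ are $\mathcal L_{Y_{\mathscr g}}$-invariant, pass to the resonance spaces via the wave-front characterization, and reduce the $\R\alpha_{\mathscr g}$ piece to the scalar problem.

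One small remark on your final paragraph: the Dang--Rivi\`ere argument you propose does not quite close the loop by itself. Their result gives $\ker\big(d|_{\Res_{Y_{\mathscr g}}(0)}\big)\cong H^0_{\mathrm{dR}}(S^*_{\mathscr g}\M)$, and even granting the absence of Jordan blocks you would still need to know that every scalar resonant state at $0$ lies in $\ker d$, i.e.\ is locally constant --- which is exactly the ``standard fact'' you already invoked. So the appeal to \cite{DR17b} is decorative here rather than a genuine alternative justification. The paper proceeds in the same way, simply citing the scalar result that $\Res_{Y_{\mathscr g}}(0)$ consists of the locally constant functions.
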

\begin{proof}
As $\iota_{Y_{\mathscr g}} \alpha_{\mathscr g} = 1$ and $T^\ast(S^\ast_{\mathscr g}\M)=\R\alpha_{\mathscr g} \oplus Y_{\mathscr g}^\perp$, we can uniquely decompose every $u\in \tu{Res}_{\mathcal L_{Y_{\mathscr g}}, T^*(S_{\mathscr g}^*\M)}(0)$ into $u=u_\perp + (\iota_{Y_{\mathscr g}} u)\alpha_{\mathscr g}$ where $\iota_{Y_{\mathscr g}} u_\perp=0$ and thus $u_\perp$
 is a distributional section of $Y_{\mathscr g}^\perp$. We have for some $J\in \N$
 \begin{equation}
  0=\mathcal L_{Y_{\mathscr g}}^J u = \mathcal L_{Y_{\mathscr g}}^J u_\perp + (Y_{\mathscr g}^J (\iota_{Y_{\mathscr g}} u))\alpha_{\mathscr g},\label{eq:18129059812}
 \end{equation}
 since $\iota_{Y_{\mathscr g}} \alpha_{\mathscr g} = 1$, $\iota_{Y_{\mathscr g}}d\alpha_{\mathscr g}=0$ implies $\mathcal L_{Y_{\mathscr g}}\alpha_{\mathscr g} =0$.  
Using Cartan's magic formula one checks $\iota_{Y_{\mathscr g}} \mathcal L_{Y_{\mathscr g}}^J u_\perp= 0$, so the wave front characterization of resonant states \eqref{eq:WF_characterization}  implies $u_\perp\in \Res_{\mathcal L_{Y_{\mathscr g}},Y_{\mathscr g}^\perp}(0)$ and $\iota_{Y_{\mathscr g}} u \in \Res_{Y_{\mathscr g}}(0)$. The latter space is of dimension $b_0(\mathcal M)$ as it consists of the locally constant functions (cf.\ the end of the proof of Lemma 
\ref{lem:sym_tensor_resonances}), so we get \eqref{eq:allforms}.
\end{proof}

\providecommand{\bysame}{\leavevmode\hbox to3em{\hrulefill}\thinspace}
\providecommand{\MR}{\relax\ifhmode\unskip\space\fi MR }
\providecommand{\MRhref}[2]{%
  \href{http://www.ams.org/mathscinet-getitem?mr=#1}{#2}
}
\providecommand{\href}[2]{#2}

\end{document}